\newtheorem{thm}{Theorem}[section]
\newtheorem{lem}[thm]{Lemma}
\newtheorem{prop}[thm]{Proposition}
\theoremstyle{definition}
\newtheorem{rem}[thm]{Remark}
\newtheorem*{ack}{Acknowledgement}
\numberwithin{equation}{section}
\numberwithin{figure}{section}
\def\Hom{{\text{\rm{Hom}}}}
\def\rchi{{\hbox{\raise1.5pt\hbox{$\chi$}}}}
\def\const{{\text{\rm{const}}}}
\def\a{\alpha}
\def\b{\beta}
\def\lam{\lambda}
\def\gam{\gamma}
\newcommand{\bea}{\begin{eqnarray}}
\newcommand{\eea}{\end{eqnarray}}
\newcommand{\be}{\begin{equation}}
\newcommand{\ee}{\end{equation}}
\newcommand{\Mbar}{{\overline{\mathcal{M}}}}
\newcommand{\bP}{{\mathbb{P}}}
\newcommand{\bC}{{\mathbb{C}}}
\newcommand{\bZ}{{\mathbb{Z}}}
\newcommand{\cD}{{\mathcal{D}}}
\newcommand{\half}{{\frac{1}{2}}}
\newcommand{\bp}{{\mathbf{p}}}
\newcommand{\rar}{\rightarrow}
\newcommand{\lrar}{\longrightarrow}
\begin{document}
\large
\setcounter{section}{0}

\title[Quantum curves for Hurwitz numbers of 
 arbitrary base]
{Quantum curves for simple 
Hurwitz numbers of an arbitrary base curve}

\author[X.\ Liu]{Xiaojun Liu}
\address{
Department of Applied Mathematics\\
China Agricultural University\\
Beijing, 100083, China\\
and
Department of Mathematics\\
University of California\\
Davis, CA 95616--8633, U.S.A.}
\email{xjliu@cau.edu.cn}

\author[M.\ Mulase]{Motohico Mulase}
\address{
Department of Mathematics\\
University of California\\
Davis, CA 95616--8633, U.S.A.}
\email{mulase@math.ucdavis.edu}

\author[A.\ Sorkin]{Adam  Sorkin}
\address{
Department of Mathematics\\
University of California\\
Davis, CA 95616--8633, U.S.A.}
\email{azsorkin@math.ucdavis.edu}

\begin{abstract}
The generating functions of simple Hurwitz numbers
of the projective line are known to satisfy many
properties. They include a heat equation,
the Eynard-Orantin topological
recursion, an infinite-order differential equation
called a quantum curve equation, and a
Schr\"odinger like partial differential equation. 
In this paper we generalize these
properties to simple Hurwitz numbers with 
an arbitrary base curve. 
\end{abstract}

\subjclass[2000]{Primary: 14H15, 14N35, 05C30, 11P21;
Secondary: 81T30}

\maketitle

\allowdisplaybreaks

\tableofcontents

\section{Introduction and the main results}
\label{sect:intro}

The purpose of this paper is to determine
functional properties of various generating
functions of simple Hurwitz numbers with 
an arbitrary fixed base curve $B$. We derive 
a partial differential  equation 
for the Laplace transform of these 
Hurwitz numbers. The equation
is completely analogousΠto the result of
\cite{MZ} for the usual
simple Hurwitz numbers based on
$\bP^1$. 
We  also obtain an infinite-order 
 differential equation, or  
   a \emph{quantum curve},
    for the case of an arbitrary
base curve,
that is parallel to the various Hurwitz problems
studied in 
\cite{BHLM, MSS,MS} with the base curve
$\bP^1$ and its twisted version $\bP^1[a]$.

Our main motivation is to 
examine whether the 
 topological recursion of Eynard-Orantin
\cite{EO1,EO3} and the 
existence of a quantum curve of
\cite{ADKMV,DHS,DHSV,DV2007,GS} hold for
the enumeration problem 
of simple Hurwitz numbers over 
an arbitrary curve $B$. Consider a holomorphic 
map $f:C\lrar B$ of a non-singular algebraic 
curve $C$ onto a fixed base curve $B$. 
We choose a general point $0\in B$
and fix it once for all.
The quantity 
we are interested in this paper is 
the base $B$ Hurwitz number
$
H_{g,n} ^B (\mu_1,\dots,
\mu_n),
$
which counts the automorphism weighted 
number of the topological types of holomorphic
maps $f$ of a genus $g$ domain curve with 
$n$ labeled preimages of $0\in B$ of
multiplicity $ (\mu_1,\dots,
\mu_n)\in \bZ_+^n$, such that $f$ is simply 
ramified other than these $n$ points. Define its
discrete Laplace transform
\begin{equation}
\begin{aligned}
\label{eq:Fgn}
F_{g,n} ^B(x_1,\dots,x_n) &=
\sum_{\vec{\mu}\in\bZ_+^n}H_{g,n} ^B(
\mu_1,\dots,\mu_n)
\prod_{i=1}^n e^{-w_i\mu_i}
\\
&=
\sum_{\vec{\mu}\in\bZ_+^n}H_{g,n} ^B(
\mu_1,\dots,\mu_n)
\prod_{i=1}^n x_i ^{\mu_i}, \quad x_i=e^{-w_i},
\end{aligned}
\end{equation}
which we call the \emph{free energy} of type $(g,n)$.
Since $\bP^1$ does not cover the base curve $B$
of genus $g(B)>0$,
$$
F_{0,n}^B(\mu_1,\dots,\mu_n) =0
$$
for any $n\ge 1$ and any value of $(\mu_1,\dots,\mu_n)
\in \bZ_+^n$ in this case. 
Our main result is the following.

\begin{thm}
\label{thm:main}
For $2g-2+n  >0$, 
the free energies $F_{g,n} ^B(x_1,\dots,x_n)$
satisfy the following 
partial differential  equation:
\begin{multline}
\label{eq:PDE}
\left(
2g-2+n-\big(1-\rchi(B)\big)
\sum_{i=1}^n x_i\frac{\partial}{\partial x_i}
\right) F_{g,n} ^B\big(x_{[n]}\big)
\\
=
\half \sum_{i\ne j} \frac{x_ix_j}{x_i-x_j}
\left(
\frac{\partial}{\partial x_i}
F_{g,n-1} ^B\big(x_{[\hat{j}]}\big)
-
 \frac{\partial}{\partial x_j}
F_{g,n-1} ^B\big(x_{[\hat{i}]}\big)
\right)
\\
+
\half \sum_{i=1}^n u_1\frac{\partial}{\partial u_1}
u_2\frac{\partial}{\partial u_2}
\Bigg|_{u_1=u_2=x_i}
\\
\left[
F_{g-1,n+1}^B\big(u_1,u_2,x_{[\hat{i}]})
+
\sum_{\substack{g_1+g_2=g\\
I\sqcup J=[\hat{i}]}}
F_{g_1,|I|+1}^B(u_1,x_I) F_{g_2,|J|+1}^B(u_2,x_J)
\right].
\end{multline}
Here $[n]=\{1,\dots,n\}$ is an index set, 
$[\hat{i}] = [n]\setminus \{i\}$, and 
for any subset $I\subset [n]$, 
we denote $x_I = (x_i)_{i\in I}$. 
We denote by $\rchi(B) = 2-2g(B)$ the
Euler characteristic of the base curve $B$. 
Note that the complexity $2g-2+n$ is 
reduced by $1$ on the right-hand side of the
recursion when $g(B)\ge 1$, 
similar to the Eynard-Orantin 
integral recursion of \cite{EO1,EO3}.

Let us define the \textbf{partition function} 
of the base $B$ Hurwitz numbers, as 
a holomorphic function in $x\in \bC$ and 
$\hbar$ with $Re(\hbar)<0$, by
\begin{equation}
\label{eq:ZB}
Z^B(x,\hbar) = \exp\left(\sum_{g=1}^\infty 
\sum_{n=1}^\infty
\frac{1}{n!}
\hbar^{2g-2+n} F_{g,n}^B(x,x,\dots,x)\right).
\end{equation}
Then it 
 satisfies a \textbf{quantum curve} like
 infinite-order differential 
equation
\begin{equation}
\label{eq:P}
\hbar x \frac{d}{dx}
\left[1- 
\hbar^{1-\rchi(B)}x\;  e^{\hbar x \frac{d}{dx}}
  \left(\frac{d}{dx} x
\right)^{1-\rchi(B)}
\right]
Z^B(x,\hbar)
=0.
\end{equation}
If we introduce 
\begin{equation*}
y = \hbar x \frac{d}{dx}
\end{equation*}
and regard it as a commuting variable, then the
total symbol of the above operator produces
a \textbf{Lambert curve} 
\begin{equation}
\label{eq:B-Lambert}
x = y^{\rchi(B)-1} e^{-y}.
\end{equation}
The partition function  also satisfies a
\textbf{Schr\"odinger} like partial differential equation
\begin{equation}
\label{eq:Q}
\left[
\frac{\partial}{\partial \hbar}-\half
\left(x \frac{\partial}{\partial x}\right)^2
+\left(\half-\frac{1-\rchi(B)}{\hbar}\right)
x \frac{\partial}{\partial x}
\right] Z^B(x,\hbar)
=0.
\end{equation}
If we denote the above operators by $P$ and $Q$, 
respectively,
then they satisfy a commutation relation:
\begin{equation}
\label{eq:[P,Q]}
[P,Q]=- \frac{1}{\hbar}\;P.
\end{equation}
This shows that the system of equations
{\rm{(\ref{eq:P})}} and 
{\rm{(\ref{eq:Q})}} are 
compatible. Moreover, 
the partition function has the
following simple expression
\begin{equation}
\label{eq:ZB sum}
Z^B(x,\hbar) =\sum_{m=0}^\infty
(m!)^{1-\rchi(B)}e^{\half m (m-1)\hbar} 
\left(\hbar^{1-\rchi(B)} x\right)^m.
\end{equation}
Although the parameter $\hbar$ is a formal
deformation parameter, if we write 
$\hbar = 2\pi i \tau$, then 
\begin{equation}
\label{eq:ZB in tau}
Z^B(x,2\pi i\tau) =\sum_{m=0}^\infty
(m!)^{1-\rchi(B)}e^{\pi i m (m-1)\tau} 
\left((2\pi i\tau)^{1-\rchi(B)} x\right)^m
\end{equation}
is an entire function in $x$ for $Im(\tau) >0$.
\end{thm}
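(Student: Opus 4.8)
The plan is to make the closed form (\ref{eq:ZB sum}) the logical centerpiece and to deduce the operator equations (\ref{eq:P}), (\ref{eq:Q}), the Lambert curve (\ref{eq:B-Lambert}), the commutator (\ref{eq:[P,Q]}) and the entireness statement (\ref{eq:ZB in tau}) from it by essentially mechanical checks, while obtaining the free-energy PDE (\ref{eq:PDE}) separately from a cut-and-join recursion. To reach (\ref{eq:ZB sum}) I would first note that $Z^B=\exp(\cdots)$ is the generating function of \emph{disconnected} covers, so it equals $\sum_d$ of the Frobenius--Mednykh character count of degree-$d$ covers of $B$ with ramification profile $\mu$ over $0$ and $b$ simple branch points elsewhere, namely $\sum_{\lambda\vdash d}(\dim\lambda/d!)^{\rchi(B)}\,\frac{|C_\mu|\,\chi_\lambda(\mu)}{\dim\lambda}\,\kappa_\lambda^{\,b}$, where $\chi_\lambda$ is the irreducible character, $C_\mu$ the conjugacy class of cycle type $\mu$ in $S_d$, and $\kappa_\lambda=\frac{|C_{(2,1^{d-2})}|\,\chi_\lambda((2,1^{d-2}))}{\dim\lambda}=\sum_{\square\in\lambda}c(\square)$ the content sum, which is exactly the eigenvalue by which inserting a transposition (one simple branch point) acts. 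Riemann--Hurwitz pins the number of simple branch points to $b=2g-2+n-d(1-\rchi(B))$, so $b$ is precisely the part of the exponent in $\hbar^{2g-2+n}$ not accounted for by the degree; summing over $b$ with weight $\hbar^b/b!$ therefore exponentiates the eigenvalue to $e^{\hbar\kappa_\lambda}$.

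The decisive step is the diagonal specialization. Setting all $x_i=x$ makes the weight of a profile $\mu$ equal to $x^{|\mu|}=x^d$ independently of $\mu$, and the factor $\sum_n\frac1{n!}$ over the labeled preimages turns the special point into an unconstrained puncture; the character sum over all profiles then collapses by the orthogonality $\sum_{\mu\vdash d}|C_\mu|\,\chi_\lambda(\mu)=\sum_{\sigma\in S_d}\chi_\lambda(\sigma)=d!\,\langle\chi_\lambda,\mathbf 1\rangle=d!\,\delta_{\lambda,(d)}$, isolating the trivial representation $\lambda=(d)$. For that representation $\dim\lambda=1$ and $\kappa_{(d)}=\binom{d}{2}$, so the handle factor $(\dim\lambda/d!)^{\rchi(B)}=(d!)^{-\rchi(B)}$ times the surviving $d!$ produces $(d!)^{1-\rchi(B)}$, while $e^{\hbar\kappa_{(d)}}=e^{\frac12 d(d-1)\hbar}$ and the leftover $\hbar^{d(1-\rchi(B))}$ assembles with $x^d$ into $(\hbar^{1-\rchi(B)}x)^d$; this is (\ref{eq:ZB sum}) term by term in $d=m$. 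I expect this normalization bookkeeping to be the main obstacle: one must track the single extra factor of $d!$ from the unconstrained marked point, reconcile the labeled connected free energies with the disconnected class-function count produced by $\exp$, and verify that the specialization really kills every $\lambda\ne(d)$. This is strictly more delicate than the $\bP^1$ case of \cite{MZ} because of the extra handle factors $(\dim\lambda/d!)^{\rchi(B)}$.

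Once (\ref{eq:ZB sum}) is in hand, everything downstream is a term-by-term verification on $Z^B=\sum_m a_m x^m$ with $a_m=(m!)^{1-\rchi(B)}e^{\frac12 m(m-1)\hbar}\hbar^{m(1-\rchi(B))}$. For the quantum curve (\ref{eq:P}) I would use $\hbar x\frac{d}{dx}x^m=\hbar m\,x^m$, $e^{\hbar x\frac{d}{dx}}x^m=e^{\hbar m}x^m$ and $(\frac{d}{dx}x)^{1-\rchi(B)}x^m=(m+1)^{1-\rchi(B)}x^m$; the bracketed operator sends $Z^B$ to $a_0+\sum_{m\ge1}\big(a_m-a_{m-1}\,m^{1-\rchi(B)}e^{(m-1)\hbar}\hbar^{1-\rchi(B)}\big)x^m$, and the ratio $a_m/a_{m-1}=m^{1-\rchi(B)}e^{(m-1)\hbar}\hbar^{1-\rchi(B)}$ makes every coefficient vanish, leaving a constant that the outer $\hbar x\frac{d}{dx}$ annihilates. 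The Lambert curve (\ref{eq:B-Lambert}) is the leading $\hbar\to0$ symbol of the bracket: with $\frac{d}{dx}x=\frac1\hbar(\hbar x\frac{d}{dx})+1\sim y/\hbar$ and $e^{\hbar x\frac{d}{dx}}\mapsto e^{y}$, the vanishing bracket becomes $x\,e^{y}y^{1-\rchi(B)}=1$, i.e.\ $x=y^{\rchi(B)-1}e^{-y}$. For the Schr\"odinger equation (\ref{eq:Q}) the only subtlety is that $\frac{\partial}{\partial\hbar}$ differentiates both the Gaussian and the $\hbar$-power in $a_m$, giving $\big(\frac12 m(m-1)+\frac{m(1-\rchi(B))}{\hbar}\big)a_m$; adding $-\frac12 m^2+(\frac12-\frac{1-\rchi(B)}{\hbar})m$ cancels identically on each monomial.

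The free-energy PDE (\ref{eq:PDE}) I would derive independently from the cut-and-join recursion for base-$B$ Hurwitz numbers: peeling off one simple branch point inserts a transposition, so $b\,H_{g,n}^B(\mu)$ equals the cut-and-join operator applied to Hurwitz numbers of one lower branch-point count, with the genus and number of parts redistributed exactly as in the ``join'' (merging two parts, lowering the complexity) and ``cut'' (splitting one part) terms. Multiplying by $\prod_i x_i^{\mu_i}$ and summing over $\mu$ converts the multiplier $b=2g-2+n-d(1-\rchi(B))$ into the operator $2g-2+n-(1-\rchi(B))\sum_i x_i\frac{\partial}{\partial x_i}$ on the left, since $\sum_i x_i\frac{\partial}{\partial x_i}$ measures the degree $d=\sum_i\mu_i$, while the generating-function avatar of cut-and-join produces the $\frac{x_ix_j}{x_i-x_j}$ term and the bilinear genus/splitting term on the right; the base curve enters only through the coefficient $1-\rchi(B)$, the recursion itself being identical to the $\bP^1$ mechanism of \cite{MSS}. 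The commutator (\ref{eq:[P,Q]}) is then a direct computation in the monomial basis, where $x\frac{\partial}{\partial x}$ acts diagonally, $e^{\hbar x\frac{\partial}{\partial x}}$ scales $x^m$ by $e^{\hbar m}$, multiplication by $x$ shifts $m\mapsto m+1$, and the only nontrivial brackets are $[\frac{\partial}{\partial\hbar},e^{\hbar x\frac{\partial}{\partial x}}]=x\frac{\partial}{\partial x}\,e^{\hbar x\frac{\partial}{\partial x}}$ and $[\frac{\partial}{\partial\hbar},\hbar^{1-\rchi(B)}]=\frac{1-\rchi(B)}{\hbar}\hbar^{1-\rchi(B)}$; collecting terms yields $[P,Q]=-\frac1\hbar P$, certifying compatibility since $PZ^B=QZ^B=0$. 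Finally, the entireness in (\ref{eq:ZB in tau}) follows from the estimate $|e^{\pi i m(m-1)\tau}|=e^{-\pi m(m-1)\operatorname{Im}\tau}$: for $\operatorname{Im}\tau>0$ this Gaussian decay $e^{-c m^2}$ dominates the growth $(m!)^{1-\rchi(B)}|x|^m\sim e^{C m\log m}$ for every fixed $x$, so the series converges locally uniformly on all of $\bC$ and defines an entire function.
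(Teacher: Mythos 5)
Your proposal is correct, and it reorganizes the proof rather than reproducing it. You and the paper rest on the same two pillars --- the cut-and-join equation, whose Laplace transform gives (\ref{eq:PDE}) exactly as you describe (the paper cites the identical computation in \cite{BHLM}), and the character formula of \cite{OP2} for covers of $B$ --- but you invert the logical order of everything downstream. The paper first derives the Schr\"odinger equation (\ref{eq:Q}) from the diagonal evaluation of (\ref{eq:PDE}), which costs a delicate boundary analysis (its Section~4) showing that the $r=0$ unramified-cover contributions, for which cut-and-join is not valid, cancel out of the recursion for $S_m$; equation (\ref{eq:Q}) then pins down $Z^B$ only up to unknown constants $c_m$, and a second argument --- the heat equation $\frac{\partial}{\partial t}e^{\bfH}=\triangle e^{\bfH}$ solved by Schur-function eigenfunction expansion with eigenvalues $\half\bp_2[\mu]$ (your $\kappa_\mu$), the $r=0$ character count (\ref{eq:r=0}) as initial data, and the principal specialization of Lemma~\ref{lem:PS} --- fixes $c_m=(m!)^{1-\rchi(B)}$. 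You instead reach the closed form (\ref{eq:ZB sum}) in one stroke: your insertion of $b$ transpositions with weight $\hbar^b/b!$ is precisely the paper's heat semigroup $e^{t\triangle}$ acting on Schur eigenfunctions, and your orthogonality collapse $\sum_{\mu\vdash d}|C_\mu|\rchi_\lam(\mu)=d!\,\delta_{\lam,(d)}$ is the character-theoretic twin of the paper's principal specialization killing all but one-row partitions. You then obtain (\ref{eq:P}), (\ref{eq:Q}), (\ref{eq:[P,Q]}), the symbol statement (\ref{eq:B-Lambert}), and entireness of (\ref{eq:ZB in tau}) by term-by-term checks, all of which are sound and agree with the paper's own verifications (its equation (\ref{eq:P-1}) and the commutator lemma of Section~6). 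What each route buys: yours bypasses the Section~4 boundary analysis entirely, at the price of making the whole theorem hinge on the connected/disconnected and $1/b!$ bookkeeping you rightly flag as the main obstacle --- note that the paper's convention, in which $\bfH(t,\bp)$ carries $t^{r}$ while the solution carries $e^{\half\bp_2[\mu]t}=\sum_r \kappa_\mu^r t^r/r!$, silently encodes exactly the identification of topological-type counts with the ordered-branch-point character count divided by $r!$, so this point must be made explicit in your write-up just as much as in the paper's. Conversely, the paper's route exhibits (\ref{eq:Q}) as a consequence of the recursion (\ref{eq:PDE}) itself, independent of the closed form, which is structurally more informative but longer. Finally, note that some character-theoretic input beyond cut-and-join is unavoidable in any proof: as the paper stresses in Remark~\ref{rem:CAJ}, the recursion (\ref{eq:PDE}) annihilates the top-degree (equivalently $r=0$) terms and so cannot determine them; your Frobenius--Mednykh computation supplies that missing input in the right place.
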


\begin{rem}
\label{rem:orbifold}
We  note that the exact same 
formulas (\ref{eq:PDE}),
(\ref{eq:B-Lambert}), (\ref{eq:Q}),
and (\ref{eq:ZB sum}) hold for the case of 
an orbifold base $B=\bP^1[a]$, $a>0$, if we 
evaluate $\rchi(B) = 1+\frac{1}{a}$, 
interpret the sum in (\ref{eq:ZB sum}) as
 running over non-negative multiples
$am$ of
$a$, and replace $(am)!$ by 
$
(am)! \longmapsto (m! a^m)^a.
$
Since the orbifold case requires a different set of 
preparations (see \cite{BHLM,DLN,MSS}), 
we will report the generalization to the
case of higher-genus twisted curve as a
base elsewhere.
\end{rem}

When the base curve $B$ is an elliptic curve 
$E$, a special case of simple Hurwitz numbers 
exhibits a \emph{quasi-modular} property.
Dijkgraaf \cite{D} considered a 
generating function
\begin{equation}
\label{eq:Fg}
F_g(q) = \sum_{n=1} ^\infty \frac{1}{n!}
H_{g,n}^E(1,1,\dots,1) q^n
\end{equation}
for $g>1$,
and 
\begin{equation}
\label{eq:F1}
F_1(q) = -\frac{1}{24} \log q + 
\sum_{n=1} ^\infty \frac{1}{n!}
H_{1,n}^E(1,1,\dots,1) q^n.
\end{equation}
The significant fact here is that 
$F_g(q)$ for $g>1$ is a quasi-modular form
 \cite{D,KZ}.
The relation between 
the free energies of type $(g,n)$ for all $n$ and 
$F_g(q)$ is given by
\begin{equation}
\label{eq:Fg and Fgn}
F_g(q)  = \lim_{\lam \rar 0}\sum_{n=1}^\infty 
\frac{1}{n!} \frac{1}{\lam^n}
F_{g,n}^E (\lam q,\lam q,\dots,\lam q).
\end{equation}

To motivate  the present work, let us
recall
the corresponding counting problem of
simple Hurwitz number
$
H_{g,n} ^{\bP^1} (\mu_1,\dots,
\mu_n)
$
for
a pointed projective line $(\bP^1,\infty)$, which
has a long history since Hurwitz \cite{H}.
Its modern interest is due, among other 
things, to its rich functional
properties  \cite{Goulden,GJ,Kazarian,V, Zhou1},
its relation with linear Hodge integrals on
$\Mbar_{g,n}$ \cite{ELSV, GV1,OP1},
and the Bouchard-Mari\~no conjecture
\cite{BM} and its solutions \cite{BEMS,EMS,MZ}.
The theorem established in \cite{EMS,MZ} shows
that the discrete Laplace transform
\begin{equation}
\label{eq:FgnP}
F_{g,n}^{\bP^1}(x_1,\dots,x_n)
=
\sum_{\vec{\mu}\in\bZ_+^n}H_{g,n} ^{\bP^1}(
\mu_1,\dots,\mu_n)
\prod_{i=1}^n x_i ^{\mu_i}
\end{equation}
satisfies an integral recursion formula that was
originally
proposed by Eynard and Orantin \cite{EO1}, 
as conjectured in \cite{BM}.
The input curve for the recursion, the \emph{spectral
curve}, is shown to be the original Lambert curve
\begin{equation}
\label{eq:Lambert}
x = y e^{-y}.
\end{equation}
Then in \cite{MS, Zhou4} it is discovered that
there is a quantum curve
of \cite{ADKMV,DHS,DHSV, DV2007,GS},
 which is a 
generator of the holonomic system that characterizes
the partition function of the simple Hurwitz numbers
of $\bP^1$. The partition function for $B=\bP^1$
is
\begin{equation*}
Z^{\bP^1}(x,\hbar) = \exp\left(\sum_{g=0}^\infty 
\sum_{n=1}^\infty
\frac{1}{n!}
\hbar^{2g-2+n} F_{g,n}^{\bP^1}(x,x,\dots,x)\right).
\end{equation*}
It is established in \cite{MS} that the $\bP^1$
partition function has 
an expression 
\begin{equation}
\label{eq:ZP1 sum}
Z^{\bP^1} (x,\hbar) = \sum_{m=0}^\infty 
\frac{1}{m!} e^{\half m(m-1)\hbar}
\left(\frac{x}{\hbar}\right)^m,
\end{equation}
and that it satisfies two 
equations:
\begin{align}
\label{eq:PP1}
&\left[
\hbar x \frac{\partial}{\partial x}
-x e^{\hbar x\frac{\partial}{\partial x}}
\right] Z^{\bP^1}(x,\hbar) = 0,
\\
\label{eq:QP1}
&\left[
\frac{\partial}{\partial \hbar}
-\half \left(x\frac{\partial}{\partial x}\right)^2
+\left(\half +\frac{1}{\hbar}\right)
x\frac{\partial}{\partial x}
\right]Z^{\bP^1}(x,\hbar) = 0.
\end{align}
If we denote the above operators as $P_1$ and $Q_1$,
respectively, then they satisfy a commutation relation
\begin{equation}
\label{eq:P1Q1}
[P_1,Q_1]=-\frac{1}{\hbar}P_1. 
\end{equation}
The semi-classical limit of each of the
equations (\ref{eq:PP1}) and (\ref{eq:QP1})
recovers the Lambert curve (\ref{eq:Lambert}),
as shown in \cite{MS}.
Then from this curve as the spectral curve,
and using the functions $x$ and $y$  on it,
the Eynard-Orantin integral recursion \cite{EO1}
determines the differential forms
\begin{equation}
\label{eq:diff}
d_1d_2\cdots d_n F_{g,n}^{\bP^1}
\end{equation}
defined on
$\Sigma^n$
for all $(g,n)$,
where $\Sigma$ is the Lambert curve given by
(\ref{eq:Lambert}). It is a simple consequence 
of the results in \cite{EMS,MS,MZ} that
the differential form (\ref{eq:diff}) uniquely 
recovers the primitive  $F_{g,n}^{\bP^1}$
 as a function in $(y_1,\dots,y_n)\in \Sigma^n$.
From this point of view, the generating function
$F_{g,n}^{\bP^1}\big(x(y_1),\dots,x(y_n)\big)$ is
completely determined by the equation
(\ref{eq:PP1}) or (\ref{eq:QP1}), where
$x$ as a function in $y$ is also given by 
(\ref{eq:Lambert}).

Our motivating question is, \emph{do the 
similar properties hold 
when we consider the simple Hurwitz numbers
with an arbitrary curve as the base?} Since the genus 
$0$ base $B$ Hurwitz numbers 
$H_{0,n}^E(\mu_1,\dots,\mu_n)$
do not exist for a base curve $B$
with $g(B)>0$, the philosophy of \cite{DMSS}
does 
not produce any spectral curve of the
Eynard-Orantin integral recursion for this 
counting problem. Yet we have a topological
recursion (\ref{eq:PDE}) in the form of a
partial differential equation.
We note the straightforward generalizations 
(\ref{eq:P}),  (\ref{eq:Q}), 
  (\ref{eq:ZB sum}), and (\ref{eq:B-Lambert}),
  that reduce
to  the $\bP^1$ case
 (\ref{eq:PP1}), (\ref{eq:QP1}), 
(\ref{eq:ZP1 sum}), and (\ref{eq:Lambert}),
respectively,
when $\rchi(B)=2$.

The paper is organize as follows. In
Section~\ref{sect:CAJ}, we derive the 
cut-and-join equation 
of simple Hurwitz numbers with an arbitrary
base curve.
Then in Section~\ref{sect:LT}, we 
consider the Laplace transform of the cut-and-join
equation, which is exactly
(\ref{eq:PDE}). Using this result we
 derive
 the Schr\"odinger equation 
 (\ref{eq:Q})
 in Section~\ref{sect:Sch}.
 Section~\ref{sect:heat} is devoted to 
 explaining the heat equation expression of
 the cut-and-join equation 
 \cite{Goulden, Kazarian,Zhou1}
 and
 deriving its consequences. 
 In Section~\ref{sect:quantum}
 we first prove
 the expansion (\ref{eq:ZB sum}), 
 derive the quantum curve (\ref{eq:P}),
 and then establish the commutator
 relation (\ref{eq:[P,Q]}).
 In the final section  we deduce the
 Lambert curve (\ref{eq:B-Lambert})
 as the semi-classical limit.

\section{A cut-and-join equation for 
simple Hurwitz numbers}
\label{sect:CAJ}

The enumeration problem we consider in this
paper is the number of topological types of
holomorphic maps $f:C\lrar B$ from a nonsingular
curve $C$ of genus $g$ to a fixed base
 curve $B$,
with an arbitrary ramification over one 
point on $B$
and simple ramification at all other critical points. 
Let us denote by $0\in B$  a \emph{general}
point arbitrarily 
chosen and fixed. 
Such a homomorphic map $f$ is referred to as
a \emph{simple Hurwitz cover} of $B$.
We label each inverse image of $0\in B$
via $f$, and  denote
 by $(\mu_1,\dots,\mu_n)\in\bZ_+^n$ the 
degrees  of $f$ at each inverse image of $0$.
The base $B$ \emph{Hurwitz number}
$H_{g,n}^B(\mu_1,\dots.\mu_n)$ counts the
automorphism weighted number of the topological
types of
simple Hurwitz covers.

The degree of the map $f$ is given by
\begin{equation}
\label{eq:d}
d = |\mu| = \sum_{i=1}^n \mu_i.
\end{equation}
Here use 
the notation $|\mu|$, 
borrowing   from 
the convention in the theory symmetric functions.
The Riemann-Hurwitz formula tells us that there
are 
\begin{equation}
\label{eq:r}
r = r(g,\mu) = 2g-2+n -d\big(2g(B)-1\big)
\end{equation}
simple ramification points.
Instead of counting the 
\emph{topological types} of $f$, we can fix 
$r$ simple branch points on $B$ in 
general position other than $0\in B$
and count the automorphism weighted 
holomorphic maps $f$.
We note that (\ref{eq:d}) and (\ref{eq:r})
imply a condition for the degree and the genus
for a base curve with $g(B)\ge 1$:
\begin{align}
\label{eq:nd}
&n\big(2g(B)-1\big)
\le d \big(2g(B)-1\big)
\le 2g-2+n,
\\
&\label{eq:g and g(B)}
n \big(g(B)-1\big)\le g-1.
\end{align}
In particular, $H_{g,n}(\vec{\mu}) = 0$
for any $n\ge 1$ if $g< g(B)$. This poses a sharp 
contrast to the Gromov-Witten theory 
of  curves \cite{OP2}.

The map $f:C\lrar B$ of degree $d$ is determined 
by a monodromy representation  
$$
\rho\in
\Hom\big(\pi_i(B\setminus \{0,1,\dots,r,\}),S_d\big)
$$
of the fundamental group of an $(r+1)$-punctured
 curve into the symmetric group of $d$ letters.
Here $\{1,\dots,r\}\subset B$ is the label
of $r$ simple branched points on $B$ chosen
in general position. 
Let $\gam_j$, $j=0, \dots,r$, be a simple
loop around each point $j\in B$. Then a
simple Hurwitz cover is constructed by assigning
each $\gam_j$ for $j\ge 1$ to a transposition 
and $\gam_0$ to a product of $n$ disjoint cycles
of length $\mu_1,\dotsm\mu_n$, subject 
to the commutator relation
$$
\rho(\gam_1)\cdots\rho(\gam_r)\rho(\gam_0)
= [\a_1,\b_1]\cdots [\a_{g(B)},\b_{g(B)}],
$$
where $\a_{i},\b_i\in S_d$ are some elements in the
permutation group.

The  \emph{cut-and-join equation}
\cite{Goulden,GJ,H, V} is the result of 
an analysis of what happens when 
we multiply a transposition  
$\rho(\gam_r)$
to a product of disjoint cycles 
$\rho(\gam_0)$. 
Since the fundamental group of 
the base curve does not make any effect on
this multiplication, the exact same formula
for $H_{g,n}^B(\vec{\mu})$ holds, as for
the simple Hurwitz numbers for $\bP^1$.
The only difference is
the number $r$ of 
simple ramification points. 

\begin{prop}[The cut-and-join equation]
For all $g\ge 0$, $n\ge 1$, and $(\mu_1,\dots,\mu_n)
\in \bZ_+^n$ subject to 
\begin{equation}
\label{eq:r>0}
2g-2+n-\big(2g(B)-1\big)\sum_{i=1}^n \mu_i >0,
\end{equation}
the simple Hurwitz numbers
$H_{g,n}^B(\mu_1,\dots,\mu_n)$
of degree $d$ satisfy the 
following equation:
\begin{multline}
\label{eq:CAJ}
\left(2g-2+n-\big(2g(B)-1\big)d\right) 
H_{g,n}^B(\mu_{[n]})
=\half\sum_{i\neq j}
\left(\mu_i+\mu_j\right)
H_{g,n-1}^B\left(\mu_i+\mu_j, 
\mu_{[\hat{i},\hat{j}]}\right)
\\
+\half\sum_{i=1}^n\sum_{\alpha+\beta=\mu_i} 
\alpha\beta 
\left(H_{g-1,n+1}^B(\alpha,\beta,\mu_{[\hat{i}]})+
\sum_{\substack{g_1+g_2=g\\ I\sqcup J=[\hat{i}]}}
H_{g_1,|I|+1}^B(\alpha,\mu_I)\;
H_{g_2,|J|+1}^B(\beta,\mu_J)\right).
\end{multline}
Here and throughout the paper we use the 
following notational convention. $[n]=\{1,\dots,n\}$ is
an index set, and $[\hat{i}] = [n]\setminus \{i\}$, etc.
For a subset $I\subset [n]$, $\mu_I = (\mu_i)_{i\in I}$.
\end{prop}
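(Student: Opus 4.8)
The plan is to derive \eqref{eq:CAJ} by analyzing the combinatorics of the monodromy representation, exactly as in the classical cut-and-join analysis for $\bP^1$, and to isolate the single place where the base curve $B$ enters. The key observation, already flagged in the text preceding the statement, is that the fundamental group of $B$ affects only the Riemann--Hurwitz count \eqref{eq:r}, and not the local multiplication of a transposition against a product of disjoint cycles. So I would structure the proof around this separation: the left-hand coefficient $2g-2+n-(2g(B)-1)d$ is precisely $r(g,\mu)$, the number of simple branch points, and the right-hand side is the purely local (base-independent) cut-and-join operator.

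First I would set up the monodromy count. A simple Hurwitz cover of degree $d=|\mu|$ corresponds to a tuple $(\rho(\gam_1),\dots,\rho(\gam_r),\rho(\gam_0))$ of elements of $S_d$ with $\rho(\gam_j)$ a transposition for $1\le j\le r$, with $\rho(\gam_0)$ of cycle type $(\mu_1,\dots,\mu_n)$ (with labeled cycles), and subject to the surface-group relation
\[
\rho(\gam_1)\cdots\rho(\gam_r)\,\rho(\gam_0)=\prod_{k=1}^{g(B)}[\a_k,\b_k],
\]
weighted by $1/d!$ for the simultaneous conjugation action. I would then consider the effect of the last transposition $\rho(\gam_r)=\tau$ acting on the cycle structure determined by the remaining data. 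The standard dichotomy is: $\tau$ either \emph{joins} two cycles (when its two points lie in distinct cycles) or \emph{cuts} one cycle into two (when they lie in the same cycle). This is the heart of the argument and the step I expect to require the most care: I must track how each local move changes the genus $g$, the number of marked preimages $n$, and the parts $\mu_i$, and verify the weighting factors. Joining cycles of lengths $\mu_i,\mu_j$ produces a single cycle of length $\mu_i+\mu_j$ and accounts for the factor $(\mu_i+\mu_j)$ and the $H_{g,n-1}^B(\mu_i+\mu_j,\mu_{[\hat i,\hat j]})$ term; cutting a cycle of length $\mu_i$ into parts $\a+\b=\mu_i$ produces the factors $\a\b$ and either lowers the genus by one (connected case, giving $H_{g-1,n+1}^B$) or disconnects the domain (giving the convolution sum over $g_1+g_2=g$ and $I\sqcup J=[\hat i]$). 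The factor of $\half$ in each sum arises from the overcounting of ordered versus unordered pairs of cycles/parts.

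The remaining point is to account correctly for the $r$ transpositions. I would argue that, having fixed the product of the first $r-1$ transpositions together with the surface-group commutator term, summing over all choices of the final transposition $\tau$ distributes over the two cut-and-join moves; since there are $r$ positions in which a distinguished transposition can appear, the $r$ collapses into the coefficient $r=2g-2+n-(2g(B)-1)d$ multiplying $H_{g,n}^B(\mu_{[n]})$ on the left. Because the commutator term $\prod_k[\a_k,\b_k]$ is an even permutation (a product of commutators) regardless of $g(B)$, it plays no role in the local parity bookkeeping of the cut-and-join move; its only footprint is through the value of $r$ via \eqref{eq:r}. I would conclude by noting that the hypothesis \eqref{eq:r>0} is exactly the condition $r>0$ guaranteeing that at least one simple branch point exists, so that the move is well-defined and the recursion is non-vacuous. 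The main obstacle throughout is purely the careful genus and weight bookkeeping in the cut move, not any new phenomenon from the base $B$, which enters only through $r$.
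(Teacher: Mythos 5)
Your overall strategy --- monodromy tuples, the cut/join dichotomy for the final transposition multiplied into $\rho(\gam_0)$, and the observation that the base curve enters only through the Riemann--Hurwitz count $r$ --- is exactly the paper's route (the paper simply cites the classical $\bP^1$ analysis and notes that only $r$ changes). But there is a genuine gap at precisely the step you flag as delicate: the origin of the prefactor $r=2g-2+n-\big(2g(B)-1\big)d$ on the left-hand side. Under the normalization you set up --- ordered tuples $\big(\rho(\gam_1),\dots,\rho(\gam_r),\rho(\gam_0)\big)$ weighted by $1/d!$, i.e.\ covers with branch points at fixed, labeled positions --- the correspondence $(\tau_1,\dots,\tau_r,\sigma)\mapsto\big((\tau_1,\dots,\tau_{r-1},\tau_r\sigma),\,\tau_r\big)$ yields the cut-and-join identity with \emph{no} factor of $r$: the merged transposition is already singled out by being last, so no residual choice remains. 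Your claim that ``there are $r$ positions in which a distinguished transposition can appear'' cannot produce the factor either: counting pairs (tuple, distinguished position) multiplies \emph{both} sides by $r$, since each position $k$ gives its own bijection after braiding $\tau_k$ to the last slot, so the $r$ cancels and never appears asymmetrically.

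The factor $r$ is really a statement about normalization. $H^B_{g,n}(\mu)$ counts \emph{topological types}, in which the $r$ simple branch points are unordered; equivalently it is the fixed-branch-point tuple count divided by $d!\,r!$. Then $r\,H_{g,n}^B(\mu)$ equals the tuple count over $d!\,(r-1)!$, which matches the right-hand side, whose entries carry $r-1$ simple branch points and the same $1/(r-1)!$; equivalently, the $r$ is what falls out of differentiating $t^r/r!$ in the generating-function (heat-equation) form of the argument, which is how the paper exploits the equation in Section~\ref{sect:heat}. A quick check shows your version is false as stated: for $B=\bP^1$, $g=0$, $\mu=(1,1)$, one has $r=2$, the cut terms vanish, and \eqref{eq:CAJ} reads $2H^{\bP^1}_{0,2}(1,1)=2H^{\bP^1}_{0,1}(2)$. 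The fixed-branch-point counts are $1$ for $(0,2,(1,1))$ (two tuples $\tau_1=\tau_2=(12)$, $\sigma={\rm id}$ with two labelings, divided by $2!$) and $\half$ for $(0,1,(2))$, so your normalization gives $2\neq 1$, while the $1/r!$-normalized numbers give $1=1$. To complete the proof you must either build the $1/r!$ into the definition of the count and let the $r$ arise as $r!/(r-1)!$, or work with unordered branch points throughout and count pairs (cover, choice of the branch point to be merged into $0$); as written, your bookkeeping derives the unprefactored equation, which contradicts \eqref{eq:CAJ}.
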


\begin{rem}
\label{rem:CAJ}
Unlike the case of $\bP^1$, the cut-and-join equation
does not determine all values of 
simple Hurwitz numbers. When the degree $d$ takes
its maximum value $\frac{2g-2+n}{2g(B)-1}$, 
the equation 
gives a trivial equality $0=0$. 
\end{rem}

The genus $1$ simple Hurwitz numbers 
based on an elliptic curve $B=E$ are  
easy to calculate. From (\ref{eq:nd}) we have
$n=d$ and $r=0$ when $g=g(B)=1$. Therefore, the
covering is unramified. If we allow 
disconnected domain, then the total number
is equal to the number of partitions
$$
p(d)=
\big|\Hom\big(\pi_1(E),S_d\big)/\!\!/S_d\big|
$$
of  degree $d$
(see for example, \cite{D}). Let 
$$
\phi(q) = \prod_{m=1}^\infty (1-q^m)
$$
be the Euler function. Then we have
$$
\sum_{n=1}^\infty \frac{1}{n!} H_{1,n}^E(1,\dots,1)
q^n = -\log \phi(q) =
\sum_{n=1}^\infty 
\left(\sum_{m|n} \frac{1}{m}\right) q^n,
$$
hence
\begin{equation}
\label{eq:H1n}
H_{1,n}^E(1,\dots,1) = n! \sum_{m|n} \frac{1}{m} = (n-1)! \sigma(n), 
\end{equation}
where $\sigma$ is the sum of  divisors function. 
This is an integer sequence, 
and its first ten terms are
 $1, 3, 8, 42, 144, 1440, 5760, 75600, 
 524160, 6531840$. This 
 sequence has many interesting properties
 (see OEIS, A038048). 

More generally, let us consider 
the case when the equality holds in 
(\ref{eq:g and g(B)}). We need this analysis
 in Section~\ref{sect:Sch}. 
 Because of (\ref{eq:nd}), $g-1=n\big(g(B)-1\big)$
 implies $d=n$ and $r=0$, hence the covering
 $f:C\lrar B$ that is counted 
 is totally unramified. Here again 
 the number of disconnected unramified
 coverings is given by the classical dimension 
 formula (see for example \cite{MY} for 
 elementary derivations):
 $$\left|\Hom\big(\pi_1(B),S_d\big)/\!\!/S_d\right|
 = \sum_{\lam\vdash d}
 \left( \frac{\dim \lam}{d!}\right)^{\rchi(B)}.
 $$
Here $\lam\vdash d$ parametrizes irreducible
representations of $S_d$, and $\dim\lam$ is its
dimension.

\section{The discrete Laplace transform}
\label{sect:LT}

The discrete Laplace transform 
$F_{g,n}^B(x_1,\dots,x_n)$ of (\ref{eq:Fgn}) is a polynomial
 of degree $\frac{2g-2+n}{2g(B)-1}$
  with the lowest degree
term $H_{g,n}^B(1,1,\dots,1) x_1\cdots x_n$. 
The following proposition 
is an analogue of the case of 
simple Hurwitz numbers of $\bP^1$,
and the proof is exactly the same as that of
\cite[Lemma~4.1]{BHLM}.

\begin{prop}
\label{prop:LaplaceTf}
The discrete Laplace transform of the
cut-and-join equation {\rm{(\ref{eq:CAJ})}} 
is precisely the 
 partial differential  equation 
{\rm{(\ref{eq:PDE})}}.
\end{prop}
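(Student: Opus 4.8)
The plan is to apply the discrete Laplace transform $\sum_{\vec{\mu}\in\bZ_+^n}(\,\cdot\,)\prod_{i=1}^n x_i^{\mu_i}$ termwise to the cut-and-join equation (\ref{eq:CAJ}) and to match the three resulting expressions against the three groups of terms in (\ref{eq:PDE}). What makes every manipulation legitimate is that, by (\ref{eq:nd}), $H_{g,n}^B(\vec{\mu})$ vanishes unless $|\mu|\le\frac{2g-2+n}{2g(B)-1}$, so $F_{g,n}^B$ is a polynomial and all the sums below are finite; interchanging summation with differentiation is then automatic. The basic dictionary is the elementary identity $x_i\frac{\partial}{\partial x_i}x_i^{\mu_i}=\mu_i x_i^{\mu_i}$, which converts multiplication by a part $\mu_i$ into the Euler operator $x_i\frac{\partial}{\partial x_i}$. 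Applying this to the left-hand side of (\ref{eq:CAJ}), using $d=\sum_i\mu_i$ together with $1-\rchi(B)=2g(B)-1$, immediately turns $\big(2g-2+n-(2g(B)-1)d\big)H_{g,n}^B$ into the operator on the left of (\ref{eq:PDE}) applied to $F_{g,n}^B$.

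For the join term I would fix a pair $i\ne j$ and regroup the sum over $\mu_i,\mu_j\ge 1$ according to $\nu=\mu_i+\mu_j$. The heart of the computation is the finite geometric sum
\[
\sum_{\alpha=1}^{\nu-1} x_i^{\alpha}x_j^{\nu-\alpha}=\frac{x_ix_j}{x_i-x_j}\big(x_i^{\nu-1}-x_j^{\nu-1}\big),
\]
which simultaneously produces the rational kernel $\frac{x_ix_j}{x_i-x_j}$ and splits the result into a piece carrying $x_i^{\nu-1}$ and a piece carrying $x_j^{\nu-1}$. Multiplying by $\nu H_{g,n-1}^B(\nu,\mu_{[\hat{i},\hat{j}]})$, summing over $\nu$ and over $\mu_{[\hat{i},\hat{j}]}$, and recognizing that $\sum_\nu \nu H\,x_i^{\nu-1}=\frac{\partial}{\partial x_i}F_{g,n-1}^B$ with the merged variable playing the role of $x_i$ (hence the argument list $x_{[\hat{j}]}$), one obtains exactly $\frac{x_ix_j}{x_i-x_j}\big(\frac{\partial}{\partial x_i}F_{g,n-1}^B(x_{[\hat{j}]})-\frac{\partial}{\partial x_j}F_{g,n-1}^B(x_{[\hat{i}]})\big)$. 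The antisymmetric difference of derivatives is thus not imposed by hand but falls out of the two terms of the geometric sum.

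For the cut term I would introduce two auxiliary variables $u_1,u_2$ to resolve the single variable $x_i$, writing $\mu_i=\alpha+\beta$ and replacing $x_i^{\mu_i}$ by $u_1^{\alpha}u_2^{\beta}$. The factor $\alpha\beta$ in (\ref{eq:CAJ}) is then produced by $u_1\frac{\partial}{\partial u_1}u_2\frac{\partial}{\partial u_2}$, and setting $u_1=u_2=x_i$ at the end restores $x_i^{\mu_i}$. Summing over $\alpha,\beta\ge 1$ and over $\mu_{[\hat{i}]}$ converts the first summand into $u_1\frac{\partial}{\partial u_1}u_2\frac{\partial}{\partial u_2}\big|_{u_1=u_2=x_i}F_{g-1,n+1}^B(u_1,u_2,x_{[\hat{i}]})$, while for the second summand the splitting $I\sqcup J=[\hat{i}]$ makes the sum over $\mu_{[\hat{i}]}$ factor, yielding $\sum_{g_1+g_2=g,\,I\sqcup J=[\hat{i}]}F_{g_1,|I|+1}^B(u_1,x_I)F_{g_2,|J|+1}^B(u_2,x_J)$ under the same differential operator. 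This reproduces the last bracket of (\ref{eq:PDE}).

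I expect the only genuinely delicate step to be the join term: one must keep straight which of $x_i,x_j$ carries the combined multiplicity $\nu$ in each of the two pieces, so that the plain derivatives $\frac{\partial}{\partial x_i}$ and $\frac{\partial}{\partial x_j}$ land on the correctly truncated argument lists $x_{[\hat{j}]}$ and $x_{[\hat{i}]}$. Everything else is bookkeeping, and the computation is identical to that of \cite[Lemma~4.1]{BHLM}, the genus of the base entering only through the constant $2g(B)-1=1-\rchi(B)$ on the left. As a consistency check I would verify the top-degree behaviour: for $|\mu|=\frac{2g-2+n}{2g(B)-1}$ the cut-and-join equation degenerates to $0=0$ (Remark~\ref{rem:CAJ}), and correspondingly the operator $2g-2+n-(1-\rchi(B))\sum_i x_i\frac{\partial}{\partial x_i}$ annihilates every monomial of that degree, so the leading terms of $F_{g,n}^B$ cause no discrepancy.
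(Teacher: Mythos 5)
Your proof is correct and follows essentially the same route as the paper, which simply defers to the computation of \cite[Lemma~4.1]{BHLM}: a termwise Laplace transform in which the Euler operators produce the left-hand side, the finite geometric sum $\sum_{\alpha=1}^{\nu-1}x_i^\alpha x_j^{\nu-\alpha}=\frac{x_ix_j}{x_i-x_j}\bigl(x_i^{\nu-1}-x_j^{\nu-1}\bigr)$ produces the join kernel, and the auxiliary variables $u_1,u_2$ with $u_1\frac{\partial}{\partial u_1}u_2\frac{\partial}{\partial u_2}$ produce the cut terms. Your closing consistency check at top degree $d=\frac{2g-2+n}{2g(B)-1}$ correctly accounts for the boundary case of Remark~\ref{rem:CAJ}, where both sides vanish, so summing over all $\vec{\mu}$ is legitimate.
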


Reflecting Remark~\ref{rem:CAJ},
the differential equation (\ref{eq:PDE}) alone
does not
determine free energies
$F_{g,n}^B(x_1,\dots,x_n)$. This is because
every homogeneous function of degree
$\frac{2g-2+n}{2g(B)-1}$
 is in the kernel of the Euler
differential operator 
$$
2g-2+n-
\big(2g(B)-1\big)
\sum_{i=1}^n x_i\frac{\partial}{\partial x_i}
$$
on the left-hand side of (\ref{eq:PDE}).
Thus the highest degree terms of 
free energies are not determined by this
differential equation.
We will determine the 
homogeneous highest degree 
terms of the free energies 
in a different method in Section~\ref{sect:heat}.

The genus $1$ elliptic Hurwitz numbers
(\ref{eq:H1n}) yields 
\begin{equation}
\label{eq:F1n}
F_{1,n}^E(x_1,\dots,x_n) = \left(
n!\sum_{m|n} \frac{1}{m}\right)
x_1\cdots x_n.
\end{equation}

\section{A Schr\"odinger equation}
\label{sect:Sch}

In this section we prove (\ref{eq:Q}), 
provided that $g(B)> 0$. We remark here 
that for $g(B) = 0$, the proof is rather 
different \cite{MS}, yet the same formula
holds.

\begin{thm}
  The partition function {\rm{(\ref{eq:ZB})}}
  satisfies the Schr\"odinger-type equation
  {\rm{(\ref{eq:Q})}}:
\begin{displaymath}
\left[
\frac{\partial}{\partial \hbar}-\half
\left(x \frac{\partial}{\partial x}\right)^2
+\left(\half-\frac{2g(B)-1}{\hbar}\right)
x \frac{\partial}{\partial x}
\right] Z^B(x,\hbar)
=0.
\end{displaymath}
\end{thm}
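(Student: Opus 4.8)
The plan is to derive the Schr\"odinger equation (\ref{eq:Q}) directly from the partial differential equation (\ref{eq:PDE}), which we already have by Proposition~\ref{prop:LaplaceTf}, by substituting the partition function ansatz and matching coefficients of powers of $\hbar$. First I would compute how each operator appearing in (\ref{eq:Q}) acts on $Z^B(x,\hbar)$ when we insert the exponential form (\ref{eq:ZB}). Writing $S(x,\hbar) = \sum_{g,n}\frac{1}{n!}\hbar^{2g-2+n} F_{g,n}^B(x,\dots,x)$ so that $Z^B = e^S$, the key observation is that $\frac{\partial}{\partial x}Z^B = (\partial_x S)Z^B$ and $\big(x\partial_x\big)^2 Z^B = \big((x\partial_x)^2 S + (x\partial_x S)^2\big)Z^B$, so after dividing through by $Z^B$ the Schr\"odinger equation becomes a statement about $S$ alone:
\begin{equation*}
\frac{\partial S}{\partial \hbar} - \half (x\partial_x)^2 S - \half (x\partial_x S)^2 + \left(\half - \frac{2g(B)-1}{\hbar}\right) x\partial_x S = 0.
\end{equation*}

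Next I would expand each term in powers of $\hbar$, using the chain rule to convert the single-variable operator $x\frac{d}{dx}$ acting on the diagonal $F_{g,n}^B(x,\dots,x)$ back into the sum $\sum_i x_i\partial_{x_i}$ of the multivariable free energy before restriction. The crucial bookkeeping point is that $\frac{\partial}{\partial\hbar}$ applied to $\frac{1}{n!}\hbar^{2g-2+n}F_{g,n}^B$ produces the factor $(2g-2+n)$, which is exactly the coefficient appearing on the left-hand side of (\ref{eq:PDE}); meanwhile the $\frac{2g(B)-1}{\hbar}\,x\partial_x S$ term supplies the $\big(1-\rchi(B)\big)\sum x_i\partial_{x_i} = (2g(B)-1)\sum x_i\partial_{x_i}$ contribution, since $1-\rchi(B) = 2g(B)-1$. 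Thus the linear-in-$S$ part of the equation reassembles the Euler operator on the left of (\ref{eq:PDE}). The quadratic term $\half(x\partial_x S)^2$ generates precisely the products $F_{g_1,|I|+1}^B F_{g_2,|J|+1}^B$ through the Cauchy product of the exponential's logarithm, and the operator $\half(x\partial_x)^2 S$ generates the $F_{g-1,n+1}^B$ term together with the genus-reducing diagonal $u_1\partial_{u_1}u_2\partial_{u_2}|_{u_1=u_2=x}$ structure.

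The main obstacle I anticipate is the combinatorial matching between the diagonal-restricted single-variable picture and the off-diagonal two-variable picture on the right-hand side of (\ref{eq:PDE}). Specifically, the genus-$g$ recursion (\ref{eq:PDE}) contains a term $\half\sum_{i\ne j}\frac{x_ix_j}{x_i-x_j}\big(\partial_{x_i}F^B_{g,n-1}(x_{[\hat\jmath]}) - \partial_{x_j}F^B_{g,n-1}(x_{[\hat\imath]})\big)$ that is manifestly absent from the naive single-variable expansion, because restricting to the diagonal $x_i = x_j = x$ makes the prefactor $\frac{x_ix_j}{x_i-x_j}$ singular. I expect to resolve this by a careful symmetrization and L'H\^opital-type limit argument: the antisymmetry of the bracket in $(i,j)$ combined with the $\frac{x_ix_j}{x_i-x_j}$ kernel yields, in the diagonal limit, a contribution involving a single derivative of $F_{g,n-1}^B$, and this must be shown to cancel or to be absorbed into the $(x\partial_x)^2$ and $x\partial_x$ terms. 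Here is exactly where the genus restriction $g(B)>0$ enters: since $F_{0,n}^B\equiv 0$ for a base of positive genus, the sum over $g$ in (\ref{eq:ZB}) starts at $g=1$, so the problematic genus-zero contributions that complicate the $\bP^1$ case in \cite{MS} simply do not arise, which is why the proof can proceed purely through the algebraic expansion described above rather than through the analytic arguments needed for $g(B)=0$. After establishing the identity at each fixed total $\hbar$-power by equating coefficients, the proof concludes by observing that every such coefficient identity is a consequence of (\ref{eq:PDE}) summed appropriately over the index set, completing the derivation of (\ref{eq:Q}).
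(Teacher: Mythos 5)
Your overall strategy coincides with the paper's: evaluate (\ref{eq:PDE}) on the diagonal $x_1=\cdots=x_n=x$, write $Z^B=e^S$, and match powers of $\hbar$. Your treatment of the cut term is also on the right track: the diagonal limit of $\frac{x_ix_j}{x_i-x_j}\left(\partial_{x_i}F^B_{g,n-1}(x_{[\hat{j}]})-\partial_{x_j}F^B_{g,n-1}(x_{[\hat{i}]})\right)$ is indeed a L'H\^opital-type computation producing $x^2\,\partial_u^2 F^B_{g,n-1}(u,x,\dots,x)\big|_{u=x}$, and this, together with the $F^B_{g-1,n+1}$ term, is what assembles into $\half x^2\frac{d^2}{dx^2}$ applied to $S_m(x):=\sum_{2g-2+n=m-1}\frac{1}{n!}F^B_{g,n}(x,\dots,x)$, exactly as in the paper.

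The genuine gap is your concluding claim that ``every such coefficient identity is a consequence of (\ref{eq:PDE}) summed appropriately,'' justified by the vanishing of the genus-zero free energies. That misdiagnoses the difficulty specific to $g(B)>0$. The cut-and-join equation --- hence (\ref{eq:PDE}) --- is only established under the hypothesis (\ref{eq:r>0}), i.e.\ $r=2g-2+n-d\big(2g(B)-1\big)>0$; at $r=0$ it degenerates to $0=0$ (Remark~\ref{rem:CAJ}), so the top-degree parts of the free energies are not governed by it at all. For $g(B)\ge 1$ the sums $S_m$ do contain exactly such unconstrained contributions: the unramified covers with $d=n$ and $g-1=n\big(g(B)-1\big)$ (for instance, all of $F^E_{1,n}$ when $B$ is elliptic). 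These have no analogue over $\bP^1$, where $r=2g-2+n+d>0$ throughout the stable range, and they sit inside every coefficient identity you propose to match. Consequently, summing the valid instances of (\ref{eq:PDE}) does not reassemble the recursion for the $S_m$'s; it yields the recursion plus boundary corrections --- this is the paper's equation (\ref{eq:adjustment}) --- and the bulk of the paper's proof consists of showing those corrections vanish: the Euler operator $m-\big(2g(B)-1\big)x\frac{d}{dx}$ kills the boundary free energies because they are homogeneous of exactly the critical degree $n$ with $m=n\big(2g(B)-1\big)$; the cut-type correction vanishes because those free energies are monomials linear in each variable, so $\partial_u^2$ annihilates them; and the two join-type corrections are empty sums by the genus inequality $n\big(g(B)-1\big)\le g-1$. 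Without this step your coefficient matching is unjustified precisely at the degrees where (\ref{eq:PDE}) carries no information. (Your observation that $F^B_{0,n}\equiv 0$ is a genuine simplification relative to the $\bP^1$ case of \cite{MS}, but it addresses a different issue and is not what closes the argument.)
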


\begin{proof} We use the same method
of the proof of \cite[Theorem~5.1, Theorem~5.3, 
Appendix~A]{MS}.
When (\ref{eq:nd}) holds, 
 the diagonal evaluation of 
  (\ref{eq:PDE}) yields
  \begin{equation}
    \label{eq:diag-PDE}
    \begin{split}
      &\left(2g-2+n-\big(2g(B)-1\big)
      x\frac{d}{dx}\right) 
      F_{g,n}^B(x,\dots,x)\\
      &=\frac{n(n-1)}{2}x^2
      \left.\frac{\partial^2}{\partial u^2}\right|_{u=x}
      F_{g,n-1}^B(u,x,\dots,x)\\
      &+\frac{n}{2}x^2
      \left.\frac{\partial^2}{\partial u_1\partial u_2}\right|_{u_1=u_2=x}
      F_{g-1,n+1}^B(u_1,u_2,x,\dots,x)\\
      &+\frac{n!}{2}\sum_{\substack{g_1+g_2=g\\n_1+n_2=n-1}}
      \frac{x^2}{(n_1+1)!(n_2+1)!}
      \frac{d}{dx}F_{g_1,n_1+1}^B(x,\dots,x)\cdot
      \frac{d}{dx}F_{g_2,n_2+1}^B(x,\dots,x).
    \end{split}
  \end{equation}
  For $m\ge 2 g(B)$, let us  define 
  \begin{equation*}
    S_m(x):=\sum_{2g-2+n=m-1}
    \frac{1}{n!}F_{g,n}^B(x,\dots,x).
  \end{equation*}
  Note that $S_m$ contains a contribution from 
  domain curves with 
   $g-1= n(g(B)-1)$, or equivalently,
    $d=n$ 
  and $r=0$, 
  for which the cut-and-join equation
  is not valid. 
  Therefore,  when we deduce an equation 
  for $S_m$'s, we need to remove these 
  boundary terms
  from the equation. Now from 
   (\ref{eq:diag-PDE}) we obtain
     \begin{equation}
    \begin{aligned}
    \label{eq:adjustment}
      &\left(m-\big(2g(B)-1\big)x\frac{d}{dx}\right)
      S_{m+1}(x) 
      \\
      &
      -\half x^2\frac{d^2}{d x^2} 
      S_m(x)
      -\half\sum_{m_1+m_2=m+1}
      x\frac{d}{dx}S_{m_1}(x)\cdot
      x\frac{d}{dx}S_{m_2}(x)\\
      =&
      \sum_{\substack{2g-2+n=m\\g-1= n(g(B)-1)}}
      \frac{1}{n!}
      \left(m-\big(2g(B)-1\big)x\frac{d}{dx}\right)
      F_{g,n}^B(x,\dots,x)\\
      &-\half
      \sum_{\substack{2g-2+n=m-1\\g-1=(n+1)(g(B)-1)}}
      \frac{1}{(n-1)!}
      x^2
      \left.
      \frac{\partial^2}{\partial u^2}
      \right|_{u=x}
      F_{g,n}^B(u,x,\dots,x)\\
      &-\half
      \sum_{\substack{2g-2+n=m-1\\g=(n-1)(g(B)-1)}}
      \frac{1}{(n-2)!}
      x^2
      \left.
      \frac{\partial^2}{\partial u_1\partial u_2}
      \right|_{u_1=u_2=x}
      F_{g,n}^B(u_1,u_2,x,\dots,x)\\
      &-\half
      \sum_{\substack{2g-2+n=m\\g-1= n(g(B)-1)}}
      \sum_{\substack{g_1+g_2=g\\n_1+n_2=n+1}}
    \frac{x^2}{n_1!\,n_2!}
        \left(  \frac{d}{dx}
F_{g_1,n_1}^B(x,\dots,x)\right)
    \left( \frac{d}{dx}
      F_{g_2,n_2}^B(x,\dots,x)\right).
    \end{aligned}
  \end{equation}
  We note here that in the boundary 
  contribution $g-1=n\big(g(B)-1\big)$, 
  since $n=d$, the free energies are 
  single monomials proportional to 
  $x_1x_2\cdots x_n$.
  Let us look at the right-hand side
  of (\ref{eq:adjustment}).
  The first line of the
  right-hand side is $0$ because 
 $F_{g,n}^B(x,\dots,x) = H_{g,n}^B(1,\dots,1)x^n$,
 and $m=2g-2+n=n\big(2g(B)-1\big)$, since $r=0$.
The second line is also $0$, because 
$F_{g,n}^B(u,x,\dots,x)$ is linear in $u$.

  The third summation on the right-hand
  side is empty, because
  we need 
  $n\big(g(B)-1\big)\le g-1 =(n-1)\big(g(B)-1\big)
  -1$,
  which does not happen. 
  Similarly, the fourth line summation is also 
  empty, because we are requiring
  $g=g_1+g_2,n+1=n_1+n_2$, and
  \begin{align*}
  n_1\big(g(B)-1\big) &\le g_1-1\\
  n_2\big(g(B)-1\big) &\le g_2-1\\
  n\big(g(B)-1\big) &= g-1.
  \end{align*}

  We have thus obtained a
  recursion equation 
    \begin{multline}
    \label{eq:S-recur}
    \left(m-(2g(B)-1)x\frac{d}{dx}\right)
    S_{m+1}(x) 
    \\
    =\half x^2\frac{d^2}{d x^2} 
    S_m(x)
    +\half\sum_{m_1+m_2=m+1}
    x\frac{d}{dx}S_{m_1}(x)\cdot
    x\frac{d}{dx}S_{m_2}(x).
  \end{multline}
  In terms of the generating function 
\begin{displaymath}
  F(x,\hbar) = \sum_{m=1}^\infty\hbar^{m-1}S_m(x),
\end{displaymath}
 (\ref{eq:S-recur})
becomes 
\begin{equation}
  \label{eq:F-PDE}
  \left(\frac{\partial}{\partial\hbar}-
    \frac{2g(B)-1}{\hbar}x\frac{d}{dx}\right)
  F(x,\hbar)=
  \half x^2\frac{d^2}{d x^2}F(x,\hbar)+
  \half \left(x\frac{d}{dx}F(x,\hbar)\right)^2.
\end{equation}
Since 
$Z^B(x,\hbar)=\exp F(x,\hbar)$,  
(\ref{eq:Q})  follows directly from 
(\ref{eq:F-PDE}).
This completes the proof.
\end{proof}

Using the Schr\"odinger equation (\ref{eq:Q}),
we can determine the form of the solution 
$Z^B(x,\hbar)$.

\begin{lem}
The partition function has the following 
expansion:
\begin{equation}
\label{eq:Z-expansion}
Z^B(x,\hbar) = \sum_{m=0} ^\infty
c_m e^{\half m(m-1)\hbar} 
\left(\hbar^{1-\rchi(B)} x\right)^m,
\end{equation}
where $c_m$ is a constant.
\end{lem}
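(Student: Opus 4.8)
The plan is to exploit the fact that the only $x$-dependence in the Schr\"odinger operator (\ref{eq:Q}) enters through the Euler operator $x\frac{\partial}{\partial x}$, which acts diagonally on monomials. First I would write the partition function as a power series in $x$ with $\hbar$-dependent coefficients,
\[
Z^B(x,\hbar) = \sum_{m=0}^\infty a_m(\hbar)\, x^m.
\]
This is legitimate because, by (\ref{eq:ZB}), the exponent $\sum_{g,n}\frac{1}{n!}\hbar^{2g-2+n}F_{g,n}^B(x,\dots,x)$ is a power series in $x$, each $F_{g,n}^B(x,\dots,x)$ being a polynomial in $x$, so its exponential is again a series of this form with coefficients that are functions of $\hbar$.

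Second, I would substitute this expansion into (\ref{eq:Q}). Since $\big(x\frac{\partial}{\partial x}\big)^k x^m = m^k x^m$, the term $\half\big(x\frac{\partial}{\partial x}\big)^2$ acts on $x^m$ as multiplication by $\half m^2$ and $x\frac{\partial}{\partial x}$ as multiplication by $m$. Collecting the coefficient of $x^m$, and recalling $1-\rchi(B) = 2g(B)-1$, decouples (\ref{eq:Q}) into an independent first-order linear ODE for each $a_m$:
\[
a_m'(\hbar) = \left(\half m(m-1) + \frac{(1-\rchi(B))\,m}{\hbar}\right)a_m(\hbar),
\]
where I have used $\half m^2 - \half m = \half m(m-1)$.

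Third, I would solve this ODE. Separating variables gives $\frac{d}{d\hbar}\log a_m = \half m(m-1) + \frac{(1-\rchi(B))m}{\hbar}$, hence
\[
a_m(\hbar) = c_m\, e^{\half m(m-1)\hbar}\,\hbar^{(1-\rchi(B))m}
= c_m\, e^{\half m(m-1)\hbar}\left(\hbar^{1-\rchi(B)}\right)^m,
\]
where $c_m$ is the constant of integration, independent of both $\hbar$ and $x$. Re-summing over $m$ and pairing the factor $\hbar^{(1-\rchi(B))m}$ with $x^m$ produces exactly (\ref{eq:Z-expansion}).

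The routine point requiring a little care is the legitimacy of the termwise manipulation: one must confirm that grouping by powers of $x$ yields coefficient functions $a_m(\hbar)$ that are genuine analytic functions of $\hbar$ and that $\frac{\partial}{\partial\hbar}$ commutes with the $x$-summation, so that each $a_m$ indeed satisfies the ODE. Since $Z^B$ is holomorphic in $x$ and $\hbar$ as asserted in the theorem, this is immediate, and the one-dimensionality of the solution space of the homogeneous first-order ODE guarantees that the integration constant $c_m$ is the only remaining freedom. I do not expect a serious obstacle here; the essential content of the argument is simply that the Euler operator diagonalizes the Schr\"odinger equation, reducing it to a family of scalar ODEs whose solutions have the claimed shape.
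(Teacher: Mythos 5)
Your proposal is correct and follows essentially the same route as the paper: expand $Z^B$ as a power series in $x$, use the fact that the Euler operator $x\frac{\partial}{\partial x}$ acts diagonally on monomials to decouple the Schr\"odinger equation (\ref{eq:Q}) into the first-order ODE $f_m' = \left(\half m(m-1) + m\,\frac{1-\rchi(B)}{\hbar}\right)f_m$ for each coefficient, and solve it to obtain the factor $e^{\half m(m-1)\hbar}\hbar^{m(1-\rchi(B))}$. The only difference is your added discussion of termwise differentiation and analyticity, which the paper handles implicitly by working with formal power series.
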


\begin{proof}
The partition function $Z^B(x,\hbar)$ 
of (\ref{eq:ZB}) is a 
formal power series in $x$ and $\hbar$. 
Thus it has an expansion of the form
$$
Z^B(x,\hbar) = \sum_{m=0} ^\infty
c_m f_m(\hbar) x^m
$$
with $f_m(\hbar)\in \bC[[\hbar]]$.
Then 
the Schr\"odinger equation
(\ref{eq:Q}) yields an ordinary
differential equation 
$$
f_m ' = \left(\half m(m-1) +m
\frac{1-\rchi(B)}{\hbar}\right)f_m,
$$
whose solution is 
$$
f_m(\hbar) = c\; e^{\half m(m-1)\hbar} 
\hbar ^{m\big(1-\rchi(B)\big)}.
$$
This completes the proof.
\end{proof}

\section{The heat equation and its consequences}
\label{sect:heat}

As we have seen in Section~\ref{sect:Sch},
the Schr\"odinger equation (\ref{eq:Q})
determines the solution 
$Z^B(x,\hbar)$ only up to the
form (\ref{eq:Z-expansion}).
To determine the coefficients, we need
another technique.

In this section we use a heat equation
technique of \cite{Kazarian}. First we remark that the
cut-and-join equation gives rise to a
heat equation for another generating
function of base $B$ Hurwitz numbers. 
To determine a solution of a heat
equation, we need to identify
the initial value. 
We will show that the initial condition 
exactly corresponds to determining the
highest degree terms of $F_{g,n}^B(x_1,\dots,x_n)$.
The exponential generating function 
of these highest degree terms 
can be determined by a character formula
of \cite{OP2}. Thus we obtain
the unique solution of the heat equation,
which in turn gives all base $B$ Hurwitz numbers.

The generating function for
base $B$ Hurwitz numbers we consider is
\begin{align}
\label{eq:H}
\mathbf{H}(t,\bp) &= \sum_{g=0}^\infty
\sum_{n=1} ^\infty \mathbf{H}_{g,n}(t,\bp),
\\
\label{eq:Hgn}
\mathbf{H}_{g,n}(t,\bp) &= 
\frac{1}{n!} \sum_{\vec{\mu}\in \bZ_+^n}
H_{g,n}^B(\vec{\mu}) \bp_\mu t^{r(g,\mu)},
\end{align}
where $r(g,\mu) = 2g-2+n
 -|\mu|\big(1-\rchi(B)\big)$ is the 
number of simple ramification points,
and $\bp_\mu = p_{\mu_1}\cdots p_{\mu_n}$.
The same argument of \cite{Goulden,Kazarian,
KazarianLando,O,Zhou1} shows that
$e^{\mathbf{H}(t,\bp)}$ satisfies
a \emph{heat equation}, that is obtained from
the cut-and-join equation.
 For a partition $\mu = (\mu_1\ge \mu_2\ge\cdots)$ 
 of a finite length $\ell(\mu)$,
we define the \textbf{shifted power-sum function}
by
\begin{equation}
\label{eq:shifted power-sum}
\bp_r[\mu] := \sum_{i=1}^\infty
\left[
\left(\mu_i-i+\half\right)^r-
\left(-i+\half\right)^r
\right].
\end{equation}
This is a finite sum of $\ell(\mu)$ terms.
Then we have \cite{Goulden, Zhou1}
\begin{equation}
\label{eq:eugenfunction}
\sum_{i,j\ge 1} \left(
(i+j)p_ip_j\frac{\partial}{\partial p_{i+j}}
+ijp_{i+j}\frac{\partial^2}{\partial p_i \partial p_j}
\right)
s_\mu(\bp)
= \bp_2[\mu] 
\cdot 
s_\mu(\bp),
\end{equation}
where $s_\mu(\bp)$ is the Schur function 
defined by
\begin{equation}
\label{eq:Schur}
s_\mu(\bp) = \sum_{|\lam|=|\mu|}
\frac{\rchi_\mu(\lam)}{z_\lam} \bp_\lam,
\quad
z_\mu = \prod_{i=1}^{\ell(\mu)} m_i! i^{m_i},
\end{equation}
$m_i =$  the number of parts in $\mu$ of 
length $i$,
and $\rchi_\mu(\lam)$ is  the value of the irreducible 
character of the representation $\mu$ 
of the symmetric group evaluated at 
the conjugacy class $\lam$.
Let us denote the \emph{cut-and-join} operator by
$$
\triangle =\half \sum_{i,j\ge 1} \left(
(i+j)p_ip_j\frac{\partial}{\partial p_{i+j}}
+ijp_{i+j}\frac{\partial^2}{\partial p_i \partial p_j}
\right).
$$
Then Schur functions are eigenfunctions of the 
operator:
$$
\triangle s_\mu(\bp) = \half \bp_2[\mu]
\cdot s_\mu(\bp),
$$
and the
cut-and-join equation of simple Hurwitz numbers
(\ref{eq:CAJ})
yields a heat equation 
\begin{equation}
\label{eq:heat}
\frac{\partial}{\partial t} e^{\mathbf{H}(t,\bp)}
=\triangle e^{\mathbf{H}(t,\bp)}.
\end{equation}
We can solve a
heat equation by the method of eigenfunction 
expansion. Thus we have an expression
\begin{equation}
\label{eq:Schur expansion}
e^{\mathbf{H}(t,\bp)}
=\sum_\mu a_\mu s_\mu(\bp)e^{\half \bp_2[\mu] t}
\end{equation}
for a constant $a_\mu$ associated with every 
partition $\mu$. These constants are determined by
the initial value $t=0$. From (\ref{eq:Hgn})
we see that
the initial value comes from the cases 
when $r(g,\mu)=0$.

First we note that (\ref{eq:r}) implies that
 simple Hurwitz numbers 
with $r=0$ correspond to the case with only 
$1$ branched point on the base curve $B$. 
If we allow disconnected domain curves to 
cover $B$, then the number of coverings
of degree $d$
with a prescribed ramification data given 
by a partition $\mu \vdash d$ over $0\in B$
with no other ramification points is easy to 
calculate. Let us denote by 
$H_{d}^{B\bullet}( \mu)$ 
such a number, where we do not label the
inverse images of $0\in B$ this time. 
Then from \cite[Eq.(0.10)]{OP2} we obtain
\begin{equation}
\label{eq:r=0}
H_{d}^{B\bullet}( \mu)
= \sum_{\lam \vdash d}
\left(\frac{\dim \lam}{d!}\right) ^{\rchi(B)}
|C_\mu|\;
\frac{\rchi_\lam(\mu)}{\dim \lam}.
\end{equation}
Here $C_\mu$ is the conjugacy class
of the permutation group $S_d$
determined by the cycle type $\mu$, 
a partition $\lam\vdash d$ is a label
of an irreducible representation of
$S_d$, and $\dim \lam$ is its dimension.
The cardinality of the conjugacy class is
 given by
\begin{equation}
\label{eq:conj}
|C_\mu| = \frac{d!}{\prod_{i} m_i! i^{m_i}}
=\frac{d!}{z_\mu}.
\end{equation}

The generating function of these disconnected
simple Hurwitz numbers can be calculated,
appealing to (\ref{eq:Schur}) and (\ref{eq:conj}),
 as follows:
 \begin{equation}
 \begin{aligned}
\label{eq:r=0,disconn}
\sum_{d=1}^\infty \sum_{\mu\vdash d}
H_d^{B\bullet}(\mu) \bp_\mu
&=
\sum_{d=1}^\infty \sum_{\mu\vdash d}
\sum_{\lam \vdash d}
\left(\frac{\dim \lam}{d!}\right) ^{\rchi(B)}
 |C_\mu|\;
\frac{\rchi_\lam(\mu)}{\dim \lam}\bp_\mu
\\
&=
\sum_{d=1}^\infty 
\sum_{\lam \vdash d}
\frac{d!}{\dim \lam}
\left(\frac{\dim \lam}{d!}\right) ^{\rchi(B)}
\sum_{\mu\vdash d}
\frac{\rchi_\lam(\mu)}{z_\mu}
\bp_\mu
\\
&=
\sum_{\lam}
\left(\frac{|\lam|!}{\dim \lam}\right)^{1-\rchi(B)}
s_{\lam}(\bp),
\end{aligned}
\end{equation}
where the last sum runs over all partitions $\lam$.

Note that $e^{\mathbf{H}(t,\bp)}$
is the generating function of 
simple Hurwitz numbers allowing
disconnected domain curves. 
Therefore, the initial value
$e^{\mathbf{H}(0,\bp)}$ counts the
disconnected base $B$ Hurwitz numbers
with only one branched point at $0 \in B$. In 
other words, we have determined the 
initial condition by (\ref{eq:r=0,disconn}).
The result is
\begin{equation}
\label{eq:initial}
e^{\mathbf{H}(0,\bp)}
=\sum_{\mu}
\left(\frac{|\mu|!}{\dim \mu}\right)^{1-\rchi(B)}
s_{\mu}(\bp)
=
\sum_\mu a_\mu s_\mu(\bp).
\end{equation}
Since Schur functions are linear basis for
symmetric functions, we establish the following.

\begin{thm}
\label{thm:expH(t,p)}
The exponential generating function of the base $B$
simple 
Hurwitz numbers is given by
\begin{equation}
\label{eq:expH(t,p)}
e^{\mathbf{H}(t,\bp)} =  
 \sum_{\mu}
\left(\frac{|\mu|!}{\dim \mu}\right)^{1-\rchi(B)}
s_{\mu}(\bp) e^{\half \bp_2[\mu] t}.
\end{equation}
\end{thm}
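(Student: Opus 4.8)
The plan is to treat the heat equation (\ref{eq:heat}) as a first-order linear evolution equation in $t$ and solve it by eigenfunction expansion, then fix the free constants from the initial data at $t=0$. Since the cut-and-join operator acts diagonally on the Schur basis, $\triangle s_\mu(\bp)=\half\bp_2[\mu]\,s_\mu(\bp)$, and since $e^{\mathbf{H}(t,\bp)}$ satisfies $\partial_t e^{\mathbf{H}}=\triangle e^{\mathbf{H}}$, I would first write the solution in the form (\ref{eq:Schur expansion}), with $t$-independent coefficients $a_\mu$. The structural fact that legitimizes this is that the equation is first order in $t$: within each fixed degree the Schur coefficient of every mode evolves by the scalar ODE $\dot c_\mu=\half\bp_2[\mu]\,c_\mu$, so $c_\mu(t)=c_\mu(0)\,e^{\half\bp_2[\mu]t}$ and the entire solution is uniquely determined by its value at a single time, which I take to be $t=0$.

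Next I would identify the initial value. From the definition (\ref{eq:Hgn}), the monomial $\bp_\mu t^{r(g,\mu)}$ survives the specialization $t=0$ exactly when $r(g,\mu)=0$, that is, precisely for covers with a single branch point over $0\in B$; hence $e^{\mathbf{H}(0,\bp)}$ is the generating function for (possibly disconnected, unlabeled) covers of $B$ unramified away from $0$, weighted by profile through $\bp_\mu$. To evaluate it I would invoke the Okounkov-Pandharipande character formula (\ref{eq:r=0}) for the degree-$d$, profile-$\mu$ count $H_d^{B\bullet}(\mu)$, assemble the generating series, and collapse the resulting double character sum. Using $|C_\mu|=d!/z_\mu$ from (\ref{eq:conj}) and the Schur expansion (\ref{eq:Schur}) to perform the inner sum over $\mu$, the computation (\ref{eq:r=0,disconn}) reduces $e^{\mathbf{H}(0,\bp)}$ to $\sum_\lam\big(|\lam|!/\dim\lam\big)^{1-\rchi(B)}s_\lam(\bp)$, which is the claimed initial condition (\ref{eq:initial}).

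Finally I would match coefficients. Comparing the $t=0$ specialization of (\ref{eq:Schur expansion}) with (\ref{eq:initial}) and using that $\{s_\mu(\bp)\}$ is a linear basis for the ring of symmetric functions, I would read off $a_\mu=\big(|\mu|!/\dim\mu\big)^{1-\rchi(B)}$ for every $\mu$. Reinserting these constants into (\ref{eq:Schur expansion}) yields exactly (\ref{eq:expH(t,p)}), completing the argument.

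I expect the only real difficulty to lie in the initial-value bookkeeping rather than in the formal heat-equation step. Specifically, one must justify that $e^{\mathbf{H}}$ is genuinely the \emph{disconnected} generating function, so that the exponential-formula passage correctly reconciles the connected, $\frac{1}{n!}$-weighted, labeled count in (\ref{eq:Hgn}) with the unlabeled disconnected count $H_d^{B\bullet}(\mu)$ appearing in (\ref{eq:r=0}); and one must correctly import the formula (\ref{eq:r=0}), which is the genuine external input supplying the $\big(\dim\lam/d!\big)^{\rchi(B)}$ weight. Once these are in place, the eigenfunction expansion and the linear-independence matching are routine.
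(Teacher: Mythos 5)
Your proposal is correct and follows essentially the same route as the paper: solve the heat equation (\ref{eq:heat}) by eigenfunction expansion in the Schur basis using (\ref{eq:eugenfunction}), identify the $t=0$ initial value with the one-branch-point ($r=0$) disconnected counts via the Okounkov--Pandharipande formula (\ref{eq:r=0}) and the collapse (\ref{eq:r=0,disconn}), and then fix the coefficients $a_\mu$ by linear independence of the Schur functions. Your closing remark about reconciling the connected, $\frac{1}{n!}$-weighted labeled counts with the unlabeled disconnected counts, and about (\ref{eq:r=0}) being the genuine external input, correctly pinpoints the only delicate bookkeeping, which the paper handles in exactly the same way.
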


\section{The quantum curve}
\label{sect:quantum}

As noted in \cite{MS}, the diagonal
specialization $F_{g,n}^B(x,x,\dots,x)$
corresponds to substituting the power-sum 
symmetric function 
\begin{equation}
\label{eq:pj}
p_j = x_1 ^j+x_2 ^j+x_3 ^j + \cdots
\end{equation}
by its principal
specialization $p_j=x^j$.
More precisely, we have the following.

\begin{lem}
\label{lem:PS}
Let us define the  
\textbf{principal specialization} 
of the power-sum symmetric functions by
\begin{align*}
p_j(s) &= \left(s^{1-\rchi(B)} x\right)^j
\\
\bp_\mu (s) &=p_{\mu_1}(s)\cdots p_{\mu_n}(s)
=\left(s^{1-\rchi(B)} x\right)^{|\mu|}.
\end{align*}
Then we have
\begin{equation}
\label{eq:Z=eH}
Z^B(x,\hbar) = e^{\mathbf{H}(\hbar,\bp(\hbar))}.
\end{equation}
\end{lem}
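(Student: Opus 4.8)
The plan is to prove the identity by substituting the principal specialization directly into the definition of $\mathbf{H}(t,\bp)$ and tracking the powers of $\hbar$; no nontrivial machinery is needed beyond the bookkeeping and the vanishing of the genus-zero contribution. First I would set $t=\hbar$ and $p_j=p_j(\hbar)=\left(\hbar^{1-\rchi(B)}x\right)^j$ in (\ref{eq:H})--(\ref{eq:Hgn}), so that $\bp_\mu(\hbar)=\left(\hbar^{1-\rchi(B)}x\right)^{|\mu|}$. This turns $\mathbf{H}(\hbar,\bp(\hbar))$ into
\[
\mathbf{H}\big(\hbar,\bp(\hbar)\big)
=\sum_{g=0}^\infty\sum_{n=1}^\infty\frac{1}{n!}
\sum_{\vec{\mu}\in\bZ_+^n}
H_{g,n}^B(\vec{\mu})\,
\left(\hbar^{1-\rchi(B)}x\right)^{|\mu|}\,
\hbar^{\,r(g,\mu)}.
\]

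The one step where anything happens is the cancellation of the $|\mu|$-dependent powers of $\hbar$. Since $r(g,\mu)=2g-2+n-|\mu|\big(1-\rchi(B)\big)$, the total power of $\hbar$ attached to each Hurwitz number is
\[
(1-\rchi(B))|\mu|+r(g,\mu)=2g-2+n,
\]
which is independent of $|\mu|$. Hence each summand reduces to $H_{g,n}^B(\vec{\mu})\,x^{|\mu|}\,\hbar^{2g-2+n}$, and the inner sum over $\vec{\mu}$ collapses to the diagonal free energy by (\ref{eq:Fgn}): $\sum_{\vec{\mu}}H_{g,n}^B(\vec{\mu})\,x^{|\mu|}=F_{g,n}^B(x,\dots,x)$. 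This yields
\[
\mathbf{H}\big(\hbar,\bp(\hbar)\big)
=\sum_{g=0}^\infty\sum_{n=1}^\infty
\frac{1}{n!}\,\hbar^{2g-2+n}\,F_{g,n}^B(x,\dots,x).
\]

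Finally I would reconcile the summation ranges. The definition (\ref{eq:ZB}) of $Z^B$ starts at $g=1$, whereas $\mathbf{H}$ starts at $g=0$; to match them I use that for a base curve with $g(B)>0$ one has $F_{0,n}^B=0$, so the entire $g=0$ row vanishes and the sum effectively begins at $g=1$. Comparing with (\ref{eq:ZB}) and exponentiating gives $Z^B(x,\hbar)=e^{\mathbf{H}(\hbar,\bp(\hbar))}$, as claimed.

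I do not expect a genuine obstacle: the content is entirely the exponent balancing of the second paragraph, together with the vanishing $F_{0,n}^B=0$. The only point worth flagging in the write-up is precisely this mismatch of ranges, which is what forces the hypothesis $g(B)>0$; in the $\bP^1$ case $\rchi(B)=2$ the genus-zero row must be retained, and then the identical computation reproduces the specialization underlying (\ref{eq:ZP1 sum}).
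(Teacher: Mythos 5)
Your proposal is correct and takes essentially the same route as the paper's proof: substitute the principal specialization into $\mathbf{H}(t,\bp)$, observe that the exponent of $\hbar$ balances to $(1-\rchi(B))|\mu| + r(g,\mu) = 2g-2+n$ independently of $\mu$, and collapse the sum over $\vec{\mu}$ into the diagonal free energies $F_{g,n}^B(x,\dots,x)$. The only difference is that you make explicit the reconciliation of the summation ranges via $F_{0,n}^B=0$ for $g(B)>0$, a point the paper handles silently by writing its sum as $\sum_{g,n\ge 1}$ from the start.
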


\begin{proof}
We have 
\begin{multline*}
\mathbf{H}(\hbar,\bp(\hbar))
=
\sum_{g,n\ge 1} \sum_{\mu\in\bZ_+^n}\frac{1}{n!} 
\hbar^{2g-2+n-|\mu|\big(1-\rchi(B)\big)}
H_{g,n}^B (\mu_1,\dots,\mu_n) \bp_\mu(\hbar)
\\
= 
\sum_{g,n\ge 1} \frac{1}{n!} 
\hbar^{2g-2+n}
F_{g,n}^E (x,x,\dots,x) ,
\end{multline*}
which yields (\ref{eq:Z=eH}).
\end{proof}

From the expansion formulas (\ref{eq:expH(t,p)})
and (\ref{eq:Z-expansion}), together with the
equality (\ref{eq:Z=eH}), we obtain 
\begin{equation}
\label{eq:PS}
\sum_{\mu} 
\left(\frac{|\mu|!}{\dim \mu}\right)^{1-\rchi(B)} 
e^{\half \bp_2[\mu] \hbar}s_\mu\big(\bp(\hbar)\big)
=
\sum_{m=0}^n c_m e^{\half m(m-1)\hbar}
\left(\hbar^{1-\rchi(B)} x\right)^m.
\end{equation}
As explained in \cite[Section~6]{MS}
and also in \cite{MSS},
this equality is exactly the effect of the principal
specialization of Lemma~\ref{lem:PS},
which turns the summation over
all partitions into the sums over just
one-row partitions.
We have thus determined the coefficients
$c_m$ in the expansion (\ref{eq:Z-expansion}).
It is given by 
\begin{equation}
\label{eq:cm}
c_m = (m!)^{1-\rchi(B)}.
\end{equation}
This completes the proof of (\ref{eq:ZB sum}).
The convergence of the infinite series is obvious 
from the shape of 
(\ref{eq:ZB sum}) or (\ref{eq:ZB in tau}), 
since 
$$
\limsup_{m\rar \infty}
 \left| (m!)^{1-\rchi(B)} e^{\half m(m-1)
\hbar}\right|^{\frac{1}{m}} = 0
$$
if $Re(\hbar)<0$.

The expansion (\ref{eq:ZB sum}) allows us to 
derive the quantum curve-type equation
(\ref{eq:P}). Since
\begin{multline*}
\left[\hbar^{1-\rchi(B)} x e^{\hbar x\frac{d}{dx}}
\left( \frac{d}{dx} x\right)^{1-\rchi(B)}
\right]
(m!)^{1-\rchi(B)}e^{\half m(m-1)\hbar}
\left(\hbar^{1-\rchi(B)} x\right)^m
\\
=
\big((m+1)!\big)^{1-\rchi(B)}e^{\half m(m+1)\hbar}
\left(\hbar^{1-\rchi(B)} x\right)^{m+1}
\end{multline*}
for every $m\ge 0$,
we have 
\begin{equation}
\label{eq:P-1}
\left(1 -\hbar^{1-\rchi(B)} x e^{\hbar x\frac{d}{dx}}
\left( \frac{d}{dx} x\right)^{1-\rchi(B)}
\right) Z^B(x,\hbar) = 1.
\end{equation}
By differentiating (\ref{eq:P-1}) we obtain
(\ref{eq:P}).

\begin{lem} Let
\begin{align}
\label{eq:opP}
P&=\hbar x\frac{d}{dx}
\left(1 -\hbar^{1-\rchi(B)} x e^{\hbar x\frac{d}{dx}}
\left( \frac{d}{dx} x\right)^{1-\rchi(B)}
\right) ,
\\
Q&=\frac{\partial}{\partial \hbar}
-\half \left(x\frac{\partial}{\partial x}\right)^2
+\left(\half -\frac{1-\rchi(B)}{\hbar}\right)
x\frac{\partial}{\partial x}.
\label{eq:opQ}
\end{align}
 Then
 \begin{equation*}
 [P,Q]=-\frac{1}{\hbar}P.
 \end{equation*}
\end{lem}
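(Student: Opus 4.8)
The plan is to verify the commutation relation by a direct computation, reducing it to checking how the two operators $P$ and $Q$ act on the monomial-type eigenfunctions that appear in the explicit expansion of the partition function. Since the relation $[P,Q]=-\frac{1}{\hbar}P$ is an identity of operators, it suffices in principle to verify it formally, but the cleanest route is to exploit the structure revealed in \eqref{eq:ZB sum}. First I would introduce the convenient shorthand $\theta = x\frac{d}{dx}$ (the Euler operator), so that $P = \hbar\theta\left(1 - \hbar^{1-\rchi(B)}x\,e^{\hbar\theta}(\theta+1)^{1-\rchi(B)}\right)$, using $\frac{d}{dx}x = \theta + 1$, and $Q = \frac{\partial}{\partial\hbar} - \frac12\theta^2 + \left(\frac12 - \frac{1-\rchi(B)}{\hbar}\right)\theta$. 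The key technical input is the commutation of $\theta$ with multiplication by $x$: one has $\theta\, x = x(\theta+1)$, equivalently $[\theta, x] = x$, and more generally $e^{c\theta}x = x\,e^{c}\,e^{c\theta}$ for a constant $c$, and $e^{\hbar\theta}$ shifts $\hbar$-derivatives in a controlled way.

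The core of the argument is to let both sides act on the basis of functions $\phi_m(x,\hbar) = e^{\frac12 m(m-1)\hbar}\left(\hbar^{1-\rchi(B)}x\right)^m$, for $m\ge 0$, in terms of which $Z^B = \sum_m (m!)^{1-\rchi(B)}\phi_m$. From the shift computation already recorded just before the lemma, the operator $K := \hbar^{1-\rchi(B)}x\,e^{\hbar\theta}(\theta+1)^{1-\rchi(B)}$ sends $(m!)^{1-\rchi(B)}\phi_m$ to $\big((m+1)!\big)^{1-\rchi(B)}\phi_{m+1}$; more precisely, I would show that $K\phi_m = (m+1)^{1-\rchi(B)}\phi_{m+1}$ as functions. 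Thus $K$ acts on the $\phi_m$ essentially as a raising operator with a known scalar, and $\theta\phi_m = m\phi_m + (1-\rchi(B))\cdot(\text{$\hbar$-weight term})$ — here I must be careful that $\phi_m$ carries both an $x$-degree $m$ and an $\hbar$-power $m(1-\rchi(B))$, so $\theta = x\partial_x$ simply returns $m\phi_m$. Then I would compute $Q\phi_m$ directly: $\partial_\hbar\phi_m = \left(\frac12 m(m-1) + m\frac{1-\rchi(B)}{\hbar}\right)\phi_m$, so that $Q\phi_m = \left(\frac12 m(m-1) + m\frac{1-\rchi(B)}{\hbar} - \frac12 m^2 + \frac12 m - m\frac{1-\rchi(B)}{\hbar}\right)\phi_m = 0$. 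This is the crucial simplification: $Q$ annihilates each $\phi_m$, which is exactly the content of the Schr\"odinger equation \eqref{eq:Q}.

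With $Q\phi_m = 0$ in hand, I would evaluate $[P,Q]$ and $-\frac1\hbar P$ on each $\phi_m$ separately and match them; because the $\phi_m$ span the relevant function space, agreement on this basis gives the operator identity. The computation splits $P = \hbar\theta(1-K)$ into its action: $P\phi_m = \hbar\theta\big(\phi_m - (m+1)^{1-\rchi(B)}\phi_{m+1}\big) = \hbar\big(m\phi_m - (m+1)\cdot(m+1)^{1-\rchi(B)}\phi_{m+1}\big)$. Since $Q\phi_j=0$ for all $j$, we get $QP\phi_m$ by applying $Q$ to this expression, where the only nonzero contribution comes from $Q$ failing to commute with the explicit $\hbar$ and $\theta$ factors in $P$; meanwhile $PQ\phi_m = 0$. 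The main obstacle I anticipate is the careful bookkeeping of the non-commutativity $[\partial_\hbar, \hbar] = 1$ and $[\theta, x] = x$ hidden inside $P$, since $Q$ contains $\partial_\hbar$ and $P$ contains an explicit factor of $\hbar$ and the operator $e^{\hbar\theta}$ whose argument involves $\hbar$; tracking how $\partial_\hbar$ differentiates the $\hbar$-dependence of $P$ itself is where the factor $-\frac1\hbar$ must emerge. Concretely, I expect the identity $\big[Q, P\big]\phi_m = \frac1\hbar P\phi_m$ to reduce, after using $Q\phi_m=0$, to the single relation $\big[\partial_\hbar - \tfrac{1-\rchi(B)}{\hbar}\theta,\ \hbar\theta(1-K)\big] = \tfrac1\hbar\,\hbar\theta(1-K)$, and the verification of this — in particular that the $\hbar$-derivative of the prefactor $\hbar$ and of the $\hbar$-dependence inside $K$ combine to produce exactly $\frac1\hbar P$ — is the computational heart of the proof. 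I would finish by confirming the overall sign, so that $[P,Q] = -[Q,P] = -\frac1\hbar P$.
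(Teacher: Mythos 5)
Your computations on the family $\phi_m(x,\hbar)=e^{\half m(m-1)\hbar}\left(\hbar^{1-\rchi(B)}x\right)^m$ are all correct: for the raising operator $K=\hbar^{1-\rchi(B)}x\,e^{\hbar x\frac{d}{dx}}\left(\frac{d}{dx}x\right)^{1-\rchi(B)}$ one indeed has $K\phi_m=(m+1)^{1-\rchi(B)}\phi_{m+1}$, $Q\phi_m=0$, and consequently $\left([P,Q]+\frac{1}{\hbar}P\right)\phi_m=0$ for every $m$. The gap is the final logical step, namely the assertion that ``because the $\phi_m$ span the relevant function space, agreement on this basis gives the operator identity.'' The lemma is an identity of \emph{operators}, and the $\phi_m$ do not span the space of functions (or formal series) of $(x,\hbar)$ on which $P$ and $Q$ act: for each power $x^m$ your family contains exactly one $\hbar$-coefficient, $e^{\half m(m-1)\hbar}\hbar^{m(1-\rchi(B))}$, rather than arbitrary ones. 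Your own key simplification exposes the problem: $Q$ annihilates every $\phi_m$, so on the span of this family $Q$ is indistinguishable from the zero operator, although $Q\neq 0$. Agreement of two operators on such a family therefore proves nothing about their equality as operators; what your argument actually establishes is only that $[P,Q]+\frac{1}{\hbar}P$ annihilates each $\phi_m$, and in particular annihilates $Z^B$ by (\ref{eq:ZB sum}) --- a strictly weaker statement than (\ref{eq:[P,Q]}).

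To close the gap you must either verify the identity on a genuinely spanning family --- for instance on $x^m f(\hbar)$ for every $m\ge 0$ and \emph{arbitrary} $f(\hbar)$, since such elements do span the series $\sum_m x^m f_m(\hbar)$ preserved by all the operators involved --- or work directly at the operator level, which is what the paper does. The paper's proof observes that $Q$ commutes with $x\frac{d}{dx}$ and with $\frac{d}{dx}x$, and then checks by a short explicit computation that $\left[\hbar^{1-\rchi(B)}x\,e^{\hbar x\frac{d}{dx}},\,Q\right]=0$: the terms produced by $\left[\,\cdot\,,\frac{\partial}{\partial\hbar}\right]$ cancel exactly against the terms produced by commuting $\hbar^{1-\rchi(B)}x$ past $-\half\left(x\frac{\partial}{\partial x}\right)^2+\left(\half-\frac{1-\rchi(B)}{\hbar}\right)x\frac{\partial}{\partial x}$. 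Hence $\left[\frac{1}{\hbar}P,Q\right]=0$, and the Leibniz rule yields
\begin{equation*}
0=\left[\frac{1}{\hbar}P,Q\right]=\frac{1}{\hbar}[P,Q]+\left[\frac{1}{\hbar},Q\right]P=\frac{1}{\hbar}[P,Q]+\frac{1}{\hbar^2}P,
\end{equation*}
which is (\ref{eq:[P,Q]}). Note that the factor $-\frac{1}{\hbar}P$ arises exactly where you predicted --- from the failure of $\frac{\partial}{\partial\hbar}$ to commute with the explicit $\hbar$-dependence of $P$ --- but this bookkeeping must be carried out on operators, not on the thin family $\{\phi_m\}$, for the conclusion to be the stated operator identity.
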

\begin{proof}
  We first note that $Q$ commutes with 
  $x\frac{d}{dx}$
  and $\frac{d}{dx}x$. 
  Since
  \begin{align*}
&  \left[\hbar^{1-\rchi(B)} x e^{\hbar x\frac{d}{dx}},
  Q\right]
  \\
  &= \left[
  \hbar^{1-\rchi(B)} x e^{\hbar x\frac{d}{dx}},
  \frac{\partial}{\partial \hbar}
  \right]
  +\left[
  \hbar^{1-\rchi(B)} x, 
 -\half \left(x\frac{\partial}{\partial x}\right)^2
+\left(\half -\frac{1-\rchi(B)}{\hbar}\right)
x\frac{\partial}{\partial x}
  \right] e^{\hbar x\frac{d}{dx}}
  \\
  &=
  -\left(1-\rchi(B)\right) 
  \hbar^{-\rchi(B)}xe^{\hbar x\frac{d}{dx}}
  -\hbar^{1-\rchi(B)} x^2\frac{d}{dx}
  e^{\hbar x\frac{d}{dx}}
  \\
  &\qquad+ \hbar^{1-\rchi(B)} x^2\frac{d}{dx}
  e^{\hbar x\frac{d}{dx}}
  +
\half  \hbar^{1-\rchi(B)} x
  e^{\hbar x\frac{d}{dx}}
  -\hbar^{1-\rchi(B)} 
  \left(\half -\frac{1-\rchi(B)}{\hbar}\right) 
xe^{\hbar x\frac{d}{dx}}
\\
&=0,
  \end{align*}
  (\ref{eq:[P,Q]}) follows from
  $$
  0 = \left[\frac{1}{\hbar}P,Q\right] 
  = \frac{1}{\hbar}[P,Q]
  +\left[\frac{1}{\hbar},Q\right]P
  =\frac{1}{\hbar}[P,Q]
  +\frac{1}{\hbar^2}P.
  $$
\end{proof}

\section{Semi-classical limit}
\label{sect:semi}

The semi-classical analysis of the operators
$P$ and $Q$ are performed in the following 
way. Suppose our counting problem had 
genus $0$ contributions $F_{0,n}^B(x_1,\dots,x_n)$.
Then we define
\begin{equation}
\label{eq:Sm}
S_m(x) :=\sum_{2g-2+n=m-1} \frac{1}{n!}
F_{g,n}^B(x,\dots,x)
\end{equation} 
as before, and consider a formal expression
\begin{equation}
\label{eq:Z-extra}
\overline{Z}^B(x,\hbar) = e^{\sum_{m=0}^\infty
\hbar^{m-1} S_m(x)}
= e^{\frac{1}{\hbar} S_0(x)
+ S_1(x)} Z^B(x,\hbar).
\end{equation}
Let us introduce a variable $u$ such that 
 $x=e^u$, and regard  the coefficients $S_m$ as
 functions in $u$. Since 
  $$
 x\frac{d}{dx} = \frac{d}{du},
 $$
we have
 $$
 P = \hbar \frac{d}{du}
 \left(1 -\hbar^{1-\rchi(B)}
  e^u e^{\hbar \frac{d}{du}}\left(
 1+\frac{d}{du}\right)^{1-\rchi(B)}\right) .
 $$
Then 
\begin{equation}
\begin{aligned}
\label{eq:semi-classicalP}
e^{-S_1}e^{-\frac{1}{\hbar} S_0}
P e^{\frac{1}{\hbar} S_0}e^{S_1}
&=S_0' -e^u \big(S_0'\big)^{2-\rchi(B)}
 e^{\frac{1}{\hbar}
\big(S_0(u+\hbar)-S_0(u)\big)} +O(\hbar)
\\
&=
S_0' -e^u \big(S_0'\big)^{2-\rchi(B)}
 e^{S_0'} + O(\hbar),
\end{aligned}
\end{equation}
where $'=\frac{d}{du} =  x\frac{d}{dx}$, and
by $O(\hbar)$ we mean an operator
whose application to the partition function
$Z^B(x,\hbar)$ produces a function of 
order $1$ or higher in $\hbar$.
Now define a new variable 
\begin{equation}
\label{eq:y}
y = S_0'.
\end{equation}
Then the semi-classical limit $\hbar\rar 0$
of (\ref{eq:semi-classicalP})
yields an equation
\begin{equation}
\label{eq:y Lambert}
y-xy^{2-\rchi(B)}e^y = y\left(
1-xy^{1-\rchi(B)}e^y 
\right)=0,
\end{equation}
since $Z^B(x,0) = 1$. Note that this is exactly
the
\emph{total symbol} of the operator 
$P$, where $\hbar x\frac{d}{dx}$ is 
represented by a commutative variable $y$.
The second factor of (\ref{eq:y Lambert}) 
also recovers 
the Lambert curve
(\ref{eq:B-Lambert}).

Although the formal manipulation 
seems to work,
however,
 we have to remember that we have
derived the operator $P$ assuming the
shape of the solution $Z^B(x,\hbar)$
as in (\ref{eq:ZB sum}). Since we
are imposing
$$
P \overline{Z}^B(x,\hbar) =0,
$$ 
it forces that $y=0$, which makes
(\ref{eq:y Lambert}) trivially correct.

On the other hand, 
since the kernel  of 
$Q$ assums only the
expansion (\ref{eq:Z-expansion}), where
the summation index $m$ can be negative,
the semi-classical analysis of  $Q$  
 does go through.

\begin{equation}
\begin{aligned}
\label{eq:semi-classicalQ}
&e^{-S_1}e^{-\frac{1}{\hbar} S_0}
Q e^{\frac{1}{\hbar} S_0}e^{S_1}
\\
&=
e^{-S_1}e^{-\frac{1}{\hbar} S_0}
\left[
\frac{\partial}{\partial \hbar}
-\half \frac{\partial^2}{\partial u^2}
+\left(\half-\frac{1-\rchi(B)}{\hbar}\right)\frac{\partial}
{\partial u}
\right]
 e^{\frac{1}{\hbar} S_0}e^{S_1}
\\
&=
-\frac{1}{\hbar^2} \left(
S_0+\half \big(S_0'\big)^2 +
 \big(1-\rchi(B)\big)S_0'\right)
 \\
 &\qquad
-\frac{1}{\hbar}
\left(\half S_0''-\half S_0'+S_0'S_1'+
\big(1-\rchi(B)\big)S_1'
\right)+O(1).
\end{aligned}
\end{equation}
For the $\hbar\rar 0$ limit to exist, we need
\begin{align}
\label{eq:1}
&S_0+\half \big(S_0'\big)^2 + 
 \big(1-\rchi(B)\big)S_0'=0,
\\
\label{eq:2}
&\half S_0''-\half S_0'+S_0'S_1'+
\big(1-\rchi(B)\big)S_1'=0.
\end{align}
From (\ref{eq:1}) we obtain 
\begin{equation}
\label{eq:S0 in y}
S_0 = -\half y^2 - \big(1-\rchi(B)\big)y,
\end{equation}
or equivalently,
\begin{equation}
\label{eq:dydu}
y=-\big(y+1-\rchi(B)\big)\frac{dy}{du}.
\end{equation}
Its solution is 
$$
y+\log y^{1-\rchi(B)} = -u +\const.
$$
If we take the constant of integration to be $0$, then
we obtain
$$
e^{u} = x =y^{\rchi(B)-1}e^{-y},
$$
recovering (\ref{eq:B-Lambert}). 
From (\ref{eq:2})
we have 
$$
\half y'-\half y+\big(y+1-\rchi(B)\big) S_1'=0,
$$
or equivalently
$$
\frac{ dS_1}{du} =-\half \frac{1}{y+1-\rchi(B)}
\frac{dy}{du}
+\half \frac{y}{y+1-\rchi(B)}\frac{dy}{dy}.
$$
Since we know $du/dy$ from (\ref{eq:dydu}),
we can integrate the above equation to obtain
\begin{equation}
\label{eq:S1 in y}
S_1 = -\half  y+\half \log \big(y+1-\rchi(B)\big) 
+\const.
\end{equation}

The above derivation of the semi-classical
limit of the operator $Q$ is valid if
$Z^B(x,\hbar)$ contains negative powers of
$\hbar$. If we assume the expansion
(\ref{eq:ZB sum}), then such a situation 
occurs when the base curve $B$ satisfies
$\rchi(B)>1$. Indeed, our formulas
(\ref{eq:S0 in y}) and (\ref{eq:S1 in y}) 
agree with those of 
\cite[Section~5]{MS} for $B=\bP^1$ 
with $y=z$, and
\cite[Section~6]{BHLM} for
$B=\bP^1[a]$ with $y=z^a$.

\begin{ack}
  X.L.\ received the  
 China Scholarship Council 
 grant CSC-2010811063, which allowed him
 to conduct research at the 
 Department of Mathematics, University of 
 California, Davis. 
 He is also supported by the National Science 
 Foundation of China grants No.11201477, 11171175, 
 and the
 Chinese Universities Scientific Fund No.2011JS041.
The research of M.M.\ is 
supported by NSF grant
DMS-1104734.
A.S.\ received support from
the U.S.\ Government.
\end{ack}


\providecommand{\bysame}{\leavevmode\hbox to3em{\hrulefill}\thinspace}

\bibliographystyle{amsplain}

\begin{thebibliography}{10}

\bibitem{ADKMV}
M.~Aganagic, R.~Dijkgraaf, A.~Klemm,
M.~Mari\~no, and C.~Vafa,
\emph{Topological Strings and Integrable 
Hierarchies},
\texttt{arXiv:hep-th/0312085}, 
Commun.\ Math.\ Phys.\
\textbf{261}, 451--516 (2006).




  

\bibitem{BEMS} G.~Borot, B.~Eynard,
M.~Mulase and B.~Safnuk, 
\emph{Hurwitz numbers, matrix models and 
topological recursion}, 	Journal
of  Geometry and Physics \textbf{61},
 522--540 (2011).
 
 \bibitem{BHLM} V.~Bouchard, D.~Hern\'andez
Serrano, X.~Liu, and M.~Mulase,
\emph{Mirror symmetry for orbifold Hurwitz numbers},
\texttt{arXiv:1301.4871}. 


\bibitem{BKMP}
V.~Bouchard, A.~Klemm, M.~Mari\~no, and S.~Pasquetti,
\emph{Remodeling the {B}-model},
Commun.\ Math.\ Phys.
 \textbf{287}, 117--178 (2008).



\bibitem{BM}
V.~ Bouchard and M.~ Mari\~no, 
\emph{Hurwitz numbers, matrix models and enumerative geometry},
Proc.\ Symposia Pure Math.\ 
 \textbf{78}, 263--283 (2008).





\bibitem{D} R.~Dijkgraaf,
\emph{Mirror symmetry and elliptic curves},
in ``The Moduli Space of Curves,'' 
Progress in Mathematics vol.~129, Dijkgraaf et al.,
editors,
149--162,
Birkh\"auser 1995.



\bibitem{DHS}
R.~Dijkgraaf, L.~Hollands, and P.~Su\l kowski,
\emph{Quantum curves and $\cD$-modules},
Journal of High Energy Physics 
\texttt{arXiv:0810.4157}, 1--58 (2009). 



\bibitem{DHSV}
R.~Dijkgraaf, L.~Hollands P.~Su\l kowski,
and C.~Vafa,
\emph{Supersymmetric gauge theories, 
intersecting branes and free Fermions},
Journal of High Energy Physics 
\texttt{arXiv:0709.4446},  (2008). 

\bibitem{DV2007}
  R.~Dijkgraaf and C.~Vafa,
  \emph{Two Dimensional Kodaira-Spencer 
  Theory and Three Dimensional Chern-Simons
  Gravity},
  \texttt{arXiv:0711.1932 [hep-th]} (2007). 



\bibitem{DLN}
N.~Do, O.~Leigh, and P.~Norbury,
\emph{Orbifold Hurwitz numbers and 
Eynard-Orantin invariants},
\texttt{arXiv:1212.6850 [math.AG]} (2012).



\bibitem{DMSS}
O.~Dumitsrescu, M.~Mulase,  A.~Sorkin and
B.~Safnuk,
\emph{The spectral curve of the Eynard-Orantin 
recursion via the Laplace transform},
\texttt{arXiv:1202.1159 [math.AG]} (2012). 







\bibitem{ELSV} T.~Ekedahl, S.~Lando, M.~Shapiro, A.~Vainshtein,
\emph{Hurwitz numbers and 
intersections on moduli spaces of curves},
Invent. Math. {\bf 146}, 297--327 (2001).







  \bibitem{EMS} B.~Eynard, M.~Mulase
  and B.~Safnuk,
  \emph{The
{L}aplace transform  of the cut-and-join equation
and the {B}ouchard-{M}ari\~no conjecture on
{H}urwitz numbers},  Publications of the Research 
Institute for Mathematical Sciences 
\textbf{47}, 629--670 (2011).




\bibitem{EO1}
  B.~Eynard and N.~Orantin,
\emph{Invariants of algebraic curves and 
topological expansion},
Communications in Number Theory
and Physics {\bf 1},  347--452 (2007).


  
  \bibitem{EO3} B.~Eynard and N.~Orantin, 
\emph{Computation of open Gromov-Witten 
invariants for toric Calabi-Yau 3-folds by 
topological recursion, a proof of the BKMP 
conjecture},  
 \texttt{arXiv:1205.1103v1 [math-ph]} (2012).   

  
  

\bibitem{Goulden}
 I.P.~Goulden,
 \emph{A differential operato for symmetric
 functions and the combinatorics of 
 multiplying transpositions},
 Trans. A.M.S., \textbf{344}, 421--440 (1994).

  \bibitem{GJ}
  I.P.~Goulden and D.M.~Jackson,
 \emph{Transitive factorisations 
 into transpositions and
  holomorphic mappings on the sphere},
  Proc. A.M.S., \textbf{125}, 51--60 (1997).


\bibitem{GV1} T.~Graber and R.~Vakil,
\emph{Hodge integrals and Hurwitz numbers via virtual localization},
Compositio Math. {\bf 135}, 25--36 (2003).


\bibitem{GS} S.~Gukov and P.~Su\l kowski,
\emph{A-polynomial, B-model, and quantization},
\texttt{arXiv:1108.0002v1 [hep-th]} (2011). 

 
 
 \bibitem{H}
A.~Hurwitz,
\emph{\"Uber Riemann'sche Fl\"achen mit gegebene
Verzweigungspunkten},
Mathematische Annalen \textbf{39}, 1--66 (1891).

\bibitem{KZ} M.~Kaneko and D.~Zagier,
\emph{A generalized Jacobi theta function and
quasi-modular forms},
in ``The Moduli Space of Curves,'' 
Progress in Mathematics vol.~129, Dijkgraaf et al.,
editors,
165--172,
Birkh\"auser 1995. 


\bibitem{Kazarian}
M.~Kazarian, 
\emph{KP hierarchy for Hodge integrals}, 
\texttt{arXiv:0809.3263} (2008). 

\bibitem{KazarianLando} M.~Kazarian, S.~Lando,
{\em An algebro-geometric proof of Witten's conjecture},
J.\ Amer.\ Math.\ Soc.\ \textbf{20}, 1079--1089  (2007).





\bibitem{M2}
  M.~Mari\~no,
  \emph{Open string amplitudes and large order behavior in topological string
  theory},
  J. High Energy Physics {\bf 0803-060}, 1--33
   (2008).





\bibitem{MSS}
M.~Mulase, S.~Shadrin, and L.~Spitz,
\emph{The spectral curve and the 
Schr\"odinger equation of  double
Hurwitz numbers and higher spin structures},
\texttt{arXiv:1301.5580}, (2013).


\bibitem{MS} M.~Mulase and P.~Su\l kowski,
\emph{Spectral curves and the Schr\"odinger equations
for the Eynard-Orantin recursion},
\texttt{arXiv:1210.3006}, (2012).  

\bibitem{MY} M.~Mulase and J.~Yu,
\emph{A 
generating function of 
the number of homomorphisms from a 
surface group into a finite group},
\texttt{arXiv:math.QA/0209008}, (2002). 


\bibitem{MZ} M.~Mulase and N.~Zhang,
\emph{Polynomial recursion formula for linear 
Hodge integrals},  Communications in Number 
Theory
and Physics {\bf 4}, 267--294 (2010).



\bibitem{O} A.~Okounkov, 
\emph{Toda equations for Hurwitz numbers}, Math. Res. Lett. {\bf 7}, 447-453 (2000).


  
\bibitem{OP1}
A.~Okounkov and R.~Pandharipande,
\emph{Gromov-Witten theory, Hurwitz numbers, and matrix 
models, I}, 
Proc.\ Symposia Pure Math.\ 
 \textbf{80}, 325--414 (2009).


 
 \bibitem{OP2}
A.~Okounkov and R.~Pandharipande,
\emph{Gromov-Witten theory, Hurwitz theory, 
and completed cycles}, 
Ann.\ Math.\ 
 \textbf{163}, 517--560 (2006).





\bibitem{V} R.~Vakil, Harvard Thesis 1997.



\bibitem{Zhou1} J.~Zhou,
\emph{Hodge integrals, Hurwitz numbers, and symmetric groups}, preprint, 
\texttt{arXiv:math/0308024}, (2003). 


\bibitem{Zhou4}
J.~Zhou,
\emph{Quantum Mirror Curves for ${\mathbb C}^3$ and the Resolved Confiold},
\texttt{arXiv:1207.0598v1}, (2012). 




\end{thebibliography}

\end{document}